\definecolor{darkgreen}{rgb}{0.00,0.5,0.00}
\newcommand{\ra}[1]{\renewcommand{\arraystretch}{#1}}
\newtheorem{theorem}{Theorem}
\theoremstyle{definition}
\newtheorem{lemma}{Lemma}
\newtheorem{remark}{Remark}
\newcommand{\n}[1]{\|#1 \|} 
\renewcommand{\a}{\alpha}
\renewcommand{\b}{\beta}
\renewcommand{\d}{\delta}
\newcommand{\la}{\lambda}
\renewcommand{\t}{\tau}
\newcommand{\s}{\sigma}
\newcommand{\e}{\varepsilon}
\renewcommand{\th}{\theta}
\newcommand{\p}{\varphi}
\newcommand{\R}{\mathbb R}
\newcommand{\E}{V}
\newcommand{\x}{\bar x}
\newcommand{\z}{\bar z}
\renewcommand{\u}{\mathbf{u}}
\newcommand{\lr}[1]{ \langle #1 \rangle}
\DeclareMathOperator{\prox}{prox}
\DeclareMathOperator{\dom}{dom}
\DeclareMathOperator{\id}{Id}
\DeclareMathOperator{\Fix}{Fix}
\DeclareMathOperator{\range}{range}
\DeclareMathOperator{\argmin}{argmin}
\DeclareMathOperator{\dist}{dist}
\title{Golden Ratio Algorithms for Variational Inequalities}
\author{Yura Malitsky\footnote{Institute for Numerical and
        Applied Mathematics, University of G\"ottingen,
        \href{mailto:y.malitsky@gmail.com}{y.malitsky@gmail.com}}}
\date{}
\begin{document}
\maketitle

\begin{abstract}
      The paper presents a fully adaptive algorithm for monotone
  variational inequalities. In each iteration the method uses two
  previous iterates for an approximation of the local Lipschitz
  constant without running a linesearch. Thus, every iteration of the
  method requires only one evaluation of a monotone operator $F$ and a
  proximal mapping $g$.  The operator $F$ need not be Lipschitz
  continuous, which also makes the algorithm interesting in the area
  of composite minimization. The method exhibits an ergodic $O(1/k)$
  convergence rate and $R$-linear rate under an error bound condition.
  We discuss possible applications of the method to fixed point
  problems as well as its different generalizations.
\end{abstract}

\textbf{Keywords. }
variational inequality $\cdot$ first-order
methods $\cdot$ linesearch $\cdot$ saddle point problem $\cdot$ composite
minimization $\cdot$ fixed point problem
\medskip

\textbf{MSC2010.} {47J20, 65K10, 65K15, 65Y20, 90C33}

\section{Introduction}
\label{sec:intro} We are interested in the variational inequality (VI) problem:
\begin{equation}
    \label{vip} \text{find } \quad z^*\in \E \quad \text{ s.t. } \quad \lr{F(z^*), z-z^*} +g(z)-g(z^*) \geq 0 \quad \forall z\in \E,
\end{equation} where $\E$ is a finite dimensional vector space and we assume that
\begin{enumerate}[ (C1) ] \label{C1-C3}
    \item \label{C1} the solution set $S$ of \eqref{vip} is nonempty;
    \item\label{C2} $g\colon \E \to (-\infty, +\infty]$ is a proper convex lower semicontinuous (lsc) function;
    \item \label{C3} $F\colon \dom g \to \E$ is monotone: $\lr{F(u)-F(v), u-v}\geq 0$ \quad $\forall u,v\in \dom g$.
\end{enumerate}
The function $g$ can be nonsmooth, and it is very common to consider
VI with $g=\d_C$, the indicator function of a closed convex set
$C$. In this case \eqref{vip} reduces to 
\begin{equation}
    \label{vip_C}
    \text{find } \quad z^*\in C \quad  \text{ s.t. } \quad \lr{F(z^*),
    z-z^*} \geq 0 \quad \forall z\in C,
\end{equation}
which is a more widely-studied problem. It is clear that one can rewrite \eqref{vip} as a
monotone inclusion: $0 \in (F + \partial g)(x^*)$. Henceforth, we
implicitly assume that we can (relatively simply) compute the
resolvent of $\partial g$ (the proximal operator of  $g$), that is
$(\id + \partial g)^{-1}$, but cannot do this for $F$, in other words computing
the resolvent $(\id +F)^{-1}$ is prohibitively expensive.

VI is a useful way to reduce many different problems that arise in
optimization, PDE, control theory, games theory to a common problem
\eqref{vip}. We recommend \cite{kinderlehrer1980introduction,pang:03}
as excellent references for a broader familiarity with the
subject.

As a motivation from the optimization point of view, we present two
sources where VI naturally arise. The first example is a
convex-concave saddle point problem:
\begin{equation}
    \label{eq:saddle} \min_x \max_y \mathcal{L}(x,y):= g_1(x) + K(x,y) - g_2(y),
\end{equation}
where $x\in \R^n$, $y\in \R^m$,
$g_1\colon \R^n \to (-\infty,+\infty]$,
$g_2\colon \R^m \to (-\infty,+\infty]$ are proper convex lsc functions
and $K\colon \dom g_1 \times \dom g_2\to \R$ is a smooth
convex-concave function. By writing down the first-order optimality
condition, it is easy to see that problem~\eqref{eq:saddle} is
equivalent to \eqref{vip} with $F$ and $g$ defined as
\begin{equation}
    z = (x,y) \qquad F(z) = \begin{bmatrix}{\nabla_x K(x,y)}\\{-\nabla_y K(x,y)}\end{bmatrix} \qquad g(z) = g_1(x) + g_2(y).
\end{equation}
Saddle point problems are ubiquitous in optimization as this is a very
convenient way to represent many nonsmooth problems, and this in turn
often allows to improve the complexity rates from $O(1/\sqrt k)$ to
$O(1/k)$.  Even in the simplest case when $K$ is bilinear form, the
saddle point problem is a typical example where the two simplest iterative
methods, the forward-backward method and the Arrow-Hurwicz method (see
\cite{arrow1958studies}), will not work. Korpelevich
in~\cite{korpel:76} and Popov in~\cite{popov:80} resolved this issue
by presenting two-step methods that converge for a general
monotone $F$. In turn, these two papers gave birth to various
improvements and extensions, see
\cite{nesterov2007dual,nemirovski2004prox,tseng00,mal-sem,malitsky15,reich:2011,lyashko11}.

Another important source of VI is a simpler problem of composite minimization
\begin{equation}\label{comp}
    \min_z J(z):= f(z) + g(z),
\end{equation}
where $z\in \R^n$, $f\colon \R^n\to \R$ is a convex smooth function
and $g$ satisfies \hyperref[C2]{(C2)}. This problem is also equivalent
to \eqref{vip} with $F = \nabla f$. On the one hand, it might not be
so clever to apply generic VI methods to a specific problem such as
\eqref{comp}. Overall, optimization methods have better theoretical
convergence rates, and this is natural, since they exploit the fact
that $F$ is a potential operator. However, this is only true under the
assumption that $\nabla f$ is $L$-Lipschitz continuous. Without such a
condition, theoretical rates for the (accelerated) proximal gradient
methods do not hold anymore. Recently, there has been a renewed
interest in optimization methods for the case with non-Lipschitz
$\nabla f$, we refer to
\cite{bauschke2016descent,tran2015composite,lu2018relatively}. An
interesting idea was developed in \cite{bauschke2016descent}, where
the descent lemma was extended to the more general case with Bregman
distance. This allows one to obtain a simple method with a fixed
stepsize even when $\nabla f$ is not Lipschitz. However, such results
are not generic as they depend drastically on problem instances
where one can use an appropriate Bregman distance.

For a general VI, even when $F$ is Lipschitz continuous but nonlinear,
computing its Lipschitz constant is not an easy task. Moreover, the
curvature of $F$ can be quite different, so the stepsizes governed by
the global Lipschitz constant will be very conservative. Thus, most
practical methods for VI are required to use linesearch --- an
auxiliary iterative procedure which runs in each iteration of the
algorithm until some criterion is satisfied, and it seems this is the
only option for the case when $F$ is not Lipschitz. To this end, most
known methods for VI with a fixed stepsize have their analogues with
linesearch. This is still an active area of research rich in diverse
ideas,
see~\cite{konnov1997class,tseng00,solodov1999new,solodov1996modified,iusem:97,bello2015variant,malitsky2018proximal,khobotov}. The
linesearch can be quite costly in general as it requires computing
additional values of $F$ or $\prox_g $, or even both in every
linesearch iteration. Moreover, the complexity estimates become not so
informative, as they only say how many outer iterations one needs to
reach the desired accuracy in which the number of linesearch
iterations is of course not included.

\bigskip

\textbf{Contributions.} In this paper, our aim is to propose an
\emph{adaptive} algorithm for solving problem~\eqref{vip} with $F$
locally Lipschitz continuous.  By \emph{adaptive} we mean that the
method does not require a linesearch to be run, and its stepsizes are
computed  using current information about the
iterates. These stepsizes approximate an inverse local Lipschitz
constant of $F$, thus they are separated from zero.  Each iteration of
the method needs only one evaluation of the proximal operator and one
value of $F$. Moreover, the stepsizes are allowed to increase from
iteration to iteration. To our knowledge, it is the first adaptive
method with such properties. The method is easy to implement and it
satisfies all standard rates for monotone VI: ergodic $O(1/k)$ and
$R$-linear if the error bound condition holds. In particular, one of
possible instances of the proposed algorithm can be given just in a
few lines
%\begin{empheq}[box=\fcolorbox{green}{white}]{align*}
\begin{empheq}[box=\fbox]{align*}
  \la_k &= \min\Bigl\{ \frac{10}{9}\la_{k-1},\ \frac{9}{16 \la_{k-2}}\frac{\n{z^k-z^{k-1}}^2}{\n{F(z^k)-F(z^{k-1})}^2},\
    \bar \la\Bigr\}\\
  \z^k & = \frac{z^k+ 2 \z^{k-1}}{3} \\
z^{k+1}& = \prox_{\la_k g}(\z^{k} - \la_k F(z^k)),
\end{empheq}
where $z^0,z^1\in \E$, $\z^0 = z^1$, $\la_0=\la_{-1}>0$, $\bar \la
>0$.

Our approach is to start from the simple case when $F$ is
$L$-Lipschitz continuous. For this case, we present the \emph{Golden
    Ratio Algorithm} with a fixed stepsize, which is interesting on
his own right and gives us an intuition for the more difficult case
with dynamic steps. Sect.~\ref{sec:gold-ratio-algor} collects these
results. In Sect.~\ref{sec:fixed} we show how one can derive new
algorithms for fixed point problems based on the proposed
framework. In particular, instead of working with the standard class
of nonexpansive operators, we consider a more general class of
demi-contractive operators. Sect.~\ref{sec:general} collects two
extensions of the adaptive Golden Ratio Algorithm.  The first proposes
an extension of the adaptive algorithm enhanced by two auxiliary
metrics. Although it is simple theoretically, it is nevertheless still
very important in applications, where it is preferable to use
different weights for different coordinates. For the second extension
we do not need monotonicity of $F$, but instead we require that the
Minty variational inequality associated with \eqref{vip} has a
solution. In Sect.~\ref{sec:numer-exper} we illustrate the performance
of the method for several problems including the aforementioned
nonmonotone case. Finally, Sect.~\ref{sec:concl} concludes the paper
by presenting several directions for further research.  \bigskip

\textbf{Preliminaries.} Let $\E$ be a finite-dimensional vector space
equipped with inner product $\lr{\cdot,\cdot}$ and norm
$\n{\cdot} = \sqrt{\lr{\cdot,\cdot}}$. For a lsc function
$g\colon \E\to (-\infty, +\infty]$ by $\dom g$ we denote the domain of
$g$, i.e., the set $\{x \colon g(x)< + \infty\}$. Given a closed
convex set $C$, $P_C$ stands for the metric projection onto $C$,
$\d_C$ denotes the indicator function of $C$ and $\dist(x,C)$ the
distance from $x$ to $C$, that is $\dist(x,C) = \n{P_Cx - x}$.  The
proximal operator $\prox_g$ for a proper lsc convex function
$g\colon \E\to (-\infty, +\infty]$ is defined as
$\prox_g(z) = \argmin_x\{g(x)+\frac 1 2\n{x-z}^2\}$. The following
characteristic property (prox-inequality) will be frequently used:
\begin{equation}\label{prox_ineq}
    \x = \prox_{g}z \quad \Leftrightarrow \quad \lr{\x - z, x- \x} \geq g(\x)
    - g(x) \quad \forall x\in \E.
\end{equation}
A simple identity important in our analysis is
\begin{equation}\label{eq:id}
    \n{\a a + (1-\a)b}^2 = \a \n{a}^2 + (1-\a)\n{b}^2 - \a
    (1-\a)\n{a-b}^2 \ \  \forall a,b \in \E \ \forall \a \in \R.
\end{equation}
The following important lemma will simplify the proofs of the main
theorems.
\begin{lemma}\label{l:fejer}
  Let $(z^k) \subset \E$ be a bounded sequence  and suppose $\lim_{k\to \infty}\n{z^k-z}$ exists whenever $z$ is a cluster point of $(z^k)$. Then $(z^k)$ is convergent.
\end{lemma}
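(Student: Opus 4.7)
The plan is to show that the sequence has a unique cluster point and then argue that a bounded sequence in a finite-dimensional space with a unique cluster point must converge to it.

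First, since $\E$ is finite-dimensional and $(z^k)$ is bounded, Bolzano--Weierstrass guarantees the existence of at least one cluster point; call it $z$. By the hypothesis, the limit $\ell := \lim_{k\to\infty}\n{z^k-z}$ exists. But because $z$ is a cluster point, some subsequence $z^{k_j}$ converges to $z$, so $\n{z^{k_j}-z}\to 0$, forcing $\ell = 0$. This already suggests $z^k \to z$, modulo uniqueness of cluster points.

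Next I would rule out a second cluster point. Suppose $z'$ is another cluster point. By the hypothesis, $\lim_{k\to\infty}\n{z^k-z'}$ exists, and the same subsequence argument (using a sub-subsequence converging to $z'$) shows this limit equals $0$. Then from
\[ \n{z-z'} \leq \n{z^k-z} + \n{z^k-z'} \to 0, \]
we obtain $z=z'$, so the cluster point is unique.

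Finally, I would conclude that a bounded sequence in finite dimensions with a unique cluster point converges to that point. If not, there would exist $\e>0$ and a subsequence $(z^{k_j})$ with $\n{z^{k_j}-z}\geq \e$; boundedness gives a further convergent sub-subsequence with limit $z''\neq z$, contradicting uniqueness of the cluster point.

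There is no serious obstacle here; the only subtle step is noticing that the hypothesis forces every cluster-point limit $\lim_k\n{z^k-z}$ to be zero (not merely to exist), which immediately collapses the set of cluster points to a singleton. The finite-dimensionality of $\E$ is used only to apply Bolzano--Weierstrass to extract convergent subsequences.
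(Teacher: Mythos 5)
Your proof is correct, and it takes a genuinely different and more elementary route than the paper. The paper never uses that the limits $\lim_k\n{z^k-z}$ are zero; it only uses their \emph{existence}, feeding them into the identity $\lr{z^k-u,u-v}=\frac12\n{z^k-v}^2-\frac12\n{z^k-u}^2-\frac12\n{u-v}^2$ to show that any two cluster points $u,v$ must coincide, and then concludes. You instead observe that for a cluster point $z$ the hypothesized limit is realized as $0$ along the subsequence converging to $z$, hence $\lim_k\n{z^k-z}=0$ — which is already the statement $z^k\to z$, so your subsequent uniqueness and Bolzano--Weierstrass steps are strictly speaking redundant (harmless, but you could stop after the first paragraph). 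The trade-off is worth noting: your shortcut relies on cluster points being \emph{strong} cluster points, so that $z^{k_j}\to z$ forces $\n{z^{k_j}-z}\to 0$; in a finite-dimensional $\E$ this is automatic, but in an infinite-dimensional Hilbert space, where such lemmas are usually applied to \emph{weak} cluster points (Opial's lemma), that implication fails and only the paper's inner-product argument survives. So your proof is the right minimal argument for the setting at hand, while the paper's is the version that generalizes.
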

\begin{proof}
  Assume that $u, v$ are two arbitrary cluster points of $(z^k)$. From
  \begin{equation}
    \label{eq:for_lemma}
    \lr{z^k - u, u - v} = \frac 1 2 \n{z^k - v}^2 - \frac 1 2 \n{z^k-u}^2 - \frac 1 2 \n{u-v}^2
  \end{equation}
  we see that there exists $ \lim_{k\to \infty} \lr{z^k - u, u -
    v}$. Assume now that $z^{k_i}\to u$ and $z^{k_j}\to v$. Then one can observe that
  \begin{equation}
    \label{eq:for_lemma2}
    0 = \lim_{i \to\infty}\lr{z^{k_i}-u, u-v} = \lim_{j\to \infty} \lr{z^{k_j}-u,v-u} = \n{v-u}^2
  \end{equation}
and hence, $u = v$. Since $u,v$ were arbitrary, we can conclude that $(z^k)$ converges to some element in $\E$.  
\end{proof}

%%%%%%%%%%%%%%%%%%%%%%%%%%%%%%%%%%%%%%%%%%%%%%%%%%%%%%%%%%%%%%%%%%%%%
\section{Golden Ratio Algorithms}
\label{sec:gold-ratio-algor}

Let $\varphi=\frac{\sqrt 5 + 1}{2}$ be the golden ratio, that is
$\varphi^2 = 1 + \varphi$.
The proposed \emph{\underline{G}olden \underline{RA}tio
\underline{AL}gorithm} (GRAAL for short) reads as a simple recursion:
\begin{equation}\label{graal-1}
\begin{aligned}
  \z^k & = \frac{(\varphi-1)z^k+  \z^{k-1}}{\varphi}\\
z^{k+1}& = \prox_{\la g}(\z^{k}-\la F(z^k)).
\end{aligned}
\end{equation}
\begin{theorem}\label{th:graal}
    Suppose that $F\colon \dom g\to \E$ is $L$--Lipschitz and
    conditions \hyperref[C1]{(C1)--(C3)} are satisfied. Let
    $z^1,\z^0\in \E$ be arbitrary and
    $\la \in (0, \frac{\varphi}{2L}]$.  Then $(z^k)$, $(\z^k)$,
    generated by \eqref{graal-1}, converge to a solution of
    \eqref{vip}.
\end{theorem}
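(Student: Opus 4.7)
The goal is to build a Fejér-type Lyapunov sequence of the form $\Phi_k = \n{\z^k - z^*}^2 + c\,\n{z^k - \z^{k-1}}^2$ for some $c>0$, and to show $\Phi_{k+1} \leq \Phi_k - (\text{nonneg.})$ along the iterates, for every $z^* \in S$. The starting point is the prox-inequality \eqref{prox_ineq} applied to $z^{k+1} = \prox_{\la g}(\z^k - \la F(z^k))$ with $z = z^*$: combining it with the VI $\lr{F(z^*),\, z^{k+1} - z^*} + g(z^{k+1}) - g(z^*) \geq 0$ and with monotonicity of $F$ should yield
\[
\lr{\z^k - z^{k+1},\, z^{k+1} - z^*} \;\geq\; \la \lr{F(z^{k+1}) - F(z^k),\, z^{k+1} - z^*}.
\]

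I would then rewrite the left-hand side via the identity \eqref{eq:id} as $\tfrac12 \n{\z^k - z^*}^2 - \tfrac12 \n{z^{k+1} - z^*}^2 - \tfrac12 \n{z^{k+1} - \z^k}^2$. To trade the ``wrong'' term $\n{z^{k+1} - z^*}^2$ for the Lyapunov term $\n{\z^{k+1} - z^*}^2$, I apply \eqref{eq:id} to the convex combination $\z^{k+1} = \tfrac{\varphi-1}{\varphi} z^{k+1} + \tfrac{1}{\varphi} \z^k$, which gives
\[
\n{\z^{k+1} - z^*}^2 = \tfrac{\varphi-1}{\varphi}\n{z^{k+1} - z^*}^2 + \tfrac{1}{\varphi}\n{\z^k - z^*}^2 - \tfrac{\varphi-1}{\varphi^2}\n{z^{k+1} - \z^k}^2,
\]
so that the golden ratio identity $\varphi - 1 = 1/\varphi$ cleans up the coefficients and lets me solve for $\n{z^{k+1} - z^*}^2$ inside the previous inequality.

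The hard part is controlling the cross term $\la \lr{F(z^{k+1}) - F(z^k),\, z^{k+1} - z^*}$. From the recursion one has $\z^k - z^k = \tfrac{1}{\varphi}(\z^{k-1} - z^k)$, hence
\[
z^{k+1} - z^k = (z^{k+1} - \z^k) - \tfrac{1}{\varphi}(z^k - \z^{k-1}).
\]
Using $L$-Lipschitzness together with two carefully weighted Young inequalities, I would bound this term by a combination of $\n{z^{k+1} - \z^k}^2$, $\n{z^k - \z^{k-1}}^2$, and $\n{z^{k+1} - z^*}^2$. The main calculational task is then to check that, with $c$ tuned appropriately and $\la \leq \varphi/(2L)$, the resulting quadratic form in $\n{z^{k+1} - \z^k}$ and $\n{z^k - \z^{k-1}}$ is nonpositive. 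The golden ratio $\varphi$ is precisely the value that makes the Young constants and $c$ balance; verifying this alignment is the principal obstacle.

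Once $\Phi_{k+1} \leq \Phi_k$ is established, $(\z^k)$ is bounded, $\n{z^{k+1} - \z^k} \to 0$, and $\n{z^k - \z^{k-1}} \to 0$; since $z^k - \z^k = \tfrac{1}{\varphi}(z^k - \z^{k-1})$, the sequences $(z^k)$ and $(\z^k)$ share the same cluster points. Passing to the limit along a subsequence in the prox-inequality, using continuity of $F$ and lower semicontinuity of $g$, one sees that every such cluster point solves \eqref{vip}. Since $\lim_{k\to\infty}\n{\z^k - z^*}$ exists whenever $z^*$ is a cluster point (because $\Phi_k$ is monotone for every solution), Lemma~\ref{l:fejer} delivers convergence of $(\z^k)$, and hence of $(z^k)$, to a point of $S$.
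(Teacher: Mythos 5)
There is a genuine gap at the very first step, and it is the step on which everything else hinges. You derive your main inequality from a \emph{single} application of the prox-inequality (at index $k+1$, tested at $z^*$), which after invoking the VI and monotonicity leaves the cross term $\la\lr{F(z^{k+1})-F(z^k),\,z^{k+1}-z^*}$ (incidentally, the sign your derivation actually produces is $\la\lr{F(z^k)-F(z^{k+1}),\,z^{k+1}-z^*}$, but this is immaterial). This term pairs a vanishing quantity, $F(z^{k+1})-F(z^k)$, with one that is merely bounded, $z^{k+1}-z^*$, and it cannot be absorbed into a Fej\'er-type decrease. Concretely, Young's inequality bounds it by $\la L\bigl(\tfrac1\e\n{z^{k+1}-z^k}^2+\e\n{z^{k+1}-z^*}^2\bigr)$; moving the second summand to the left reduces the coefficient of $\n{z^{k+1}-z^*}^2$ strictly below that of $\n{\z^k-z^*}^2$ on the right, and once you substitute the identity expressing $\n{z^{k+1}-z^*}^2$ through $\n{\z^{k+1}-z^*}^2$ and $\n{\z^k-z^*}^2$, the coefficients no longer telescope: your $\Phi_k$ acquires a multiplicative factor strictly greater than $1$ per iteration. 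No choice of $c$, $\e$, or $\la\leq\varphi/(2L)$ repairs this; it is precisely the failure mode of the forward--backward method on a bilinear saddle point, which this scheme is designed to circumvent.

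The missing idea is that the prox-inequality must be used \emph{twice}: once at index $k+1$ tested at $z^*$, and once at index $k$ tested at the new iterate $z^{k+1}$ (after rewriting $z^k-\z^{k-1}=\varphi(z^k-\z^k)$). Summing the two, the operator values combine into $\la\lr{F(z^k)-F(z^{k-1}),\,z^k-z^{k+1}}$, where now \emph{both} factors are differences of consecutive iterates. Lipschitz continuity and Young's inequality then bound this by $\tfrac{\varphi}{2}\bigl(\n{z^k-z^{k-1}}^2+\n{z^{k+1}-z^k}^2\bigr)$; the first summand telescopes into the Lyapunov term (which should therefore be $\n{z^{k+1}-z^k}^2$, not $\n{z^{k+1}-\z^k}^2$), and the second is absorbed by the negative term $-\varphi\n{z^{k+1}-z^k}^2$ produced by expanding $\varphi\lr{z^k-\z^k,\,z^{k+1}-z^k}$ from the second prox-inequality, leaving $-\varphi\n{z^k-\z^k}^2$ as the decrease. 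The remainder of your plan --- the convex-combination identity for $\z^{k+1}$, the role of $\varphi^2=1+\varphi$, boundedness, $\n{z^k-\z^k}\to 0$, passing to the limit in the prox-inequality, and Lemma~\ref{l:fejer} --- matches the paper and goes through once the correct combined inequality is in place.
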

\begin{proof}
    By the prox-inequality~\eqref{prox_ineq} we have
\begin{equation}
    \label{eq:4}
    \lr{z^{k+1}-\z^k+\la F(z^k), z - z^{k+1}}\geq \la
    (g(z^{k+1})-g(z)) \quad \forall z\in \E
\end{equation}
and
\begin{equation}
    \label{eq:5}
    \lr{z^{k}-\z^{k-1}+\la F(z^{k-1}), z^{k+1} - z^{k}}\geq  \la( g(z^{k})-g(z^{k+1})).
\end{equation}
Note that $z^k - \z^{k-1} = \frac{1+\varphi}{\varphi}(z^k-\z^{k}) =
\varphi(z^k -\z^k)$. Hence, we can rewrite \eqref{eq:5} as
\begin{equation}
    \label{eq:6}
    \lr{\varphi(z^{k}-\z^{k})+\la F(z^{k-1}), z^{k+1} - z^{k}}\geq \la( g(z^{k})-g(z^{k+1})).
\end{equation}
Summation of \eqref{eq:4} and \eqref{eq:6} yields
\begin{multline}\label{eq:summation}
    \lr{z^{k+1}-\z^k, z - z^{k+1}} + \varphi \lr{z^{k}-\z^{k}, z^{k+1}
    - z^{k}} + \la \lr{F(z^k),z - z^k} \\ + \la \lr{F(z^k)-F(z^{k-1}), z^k - z^{k+1}} \geq \la (g(z^k) - g(z)).
  \end{multline} 
Expressing the first two terms in~\eqref{eq:summation} through norms,  we arrive at
\begin{align}
    \label{eq:7}
    \n{z^{k+1}-z}^2  \leq \n{\z^k-z}^2 & - \n{z^{k+1}-\z^k}^2 +2\la\lr{F(z^k)-F(z^{k-1}),z^k-z^{k+1}} \nonumber \\ & +\varphi(\n{z^{k+1}-\z^k}^2 - \n{z^{k+1}-z^k}^2
    -\n{z^k-\z^k}^2) \nonumber \\ 
  & -2\la\bigl(\lr{F(z^k),z^k-z} +g(z^k)-g(z)\bigr).
\end{align}
Choose $z=z^*\in S$. By \hyperref[C3]{(C3)}, the rightmost term in
\eqref{eq:7} is nonnegative:
\begin{equation}
    \label{eq:monot-est}
    \lr{F(z^k),z^k-z^*} + g(z^k)-g(z^*) \geq \lr{F(z^*),z^k-z^*} + g(z^k)-g(z^*)
     \geq 0.
\end{equation}
By \eqref{eq:id} and $z^{k+1} = (1+\varphi)\z^{k+1}- \varphi \z^{k} $ we have
\begin{align}\label{eq:identity}
    \n{z^{k+1}-z^*}^2&  =(1+\varphi)\n{\z^{k+1}-z^*}^2 -
    \varphi \n{\z^{k}-z^*}^2+\varphi(1+
    \varphi)\n{\z^{k+1}-\z^k}^2\nonumber \\
    & = (1+\varphi)\n{\z^{k+1}-z^*}^2 -\varphi \n{\z^{k}-z^*}^2+\frac{1}{\varphi}\n{z^{k+1}-\z^k}^2.
\end{align}
Combining \eqref{eq:7} and \eqref{eq:monot-est} with \eqref{eq:identity}, we deduce
\begin{align}
    \label{eq:8}
(1+\varphi)\n{\z^{k+1}-z^*}^2 \leq
    (1 & +\varphi)\n{\z^{k}-z^*}^2  - \varphi (
    \n{z^{k+1}-z^k}^2+\n{z^k-\z^k}^2) \nonumber \\
    & +2\la\n{F(z^k)-F(z^{k-1})}\n{z^k-z^{k+1}}.
\end{align}
From $\la \leq \frac{\varphi}{2L}$ it follows that
\begin{equation}
2 \la \n{F(z^k)-F(z^{k-1})}\n{z^{k+1}-z^k}\leq
\frac{\varphi}{2}(\n{z^{k}-z^{k-1}}^2 +\n{z^{k+1}-z^k}^2),
\end{equation}
which finally leads to
\begin{equation}
    \label{eq:9}
    (1+\varphi)\n{\z^{k+1}
-z^*}^2 + \frac{\varphi}{2}\n{z^{k+1}-z^k}^2  \leq
    (1+\varphi)\n{\z^{k}-z^*}^2 +  \frac{\varphi}{2}\n{z^{k}-z^{k-1}}^2
    - \varphi \n{z^k-\z^k}^2.
\end{equation}
% From \eqref{eq:9} we conclude that $(\z^k)$ is bounded and
% $\lim_{k\to \infty} \n{z^k-\z^k} = 0$. Hence, $(z^k)$ has at least one
% cluster point.  Taking the limit in \eqref{eq:4} (going to the
% subsequences if needed) and using that $\n{z^{k+1}-\z^k}\to 0$ and
% \hyperref[C2]{(C2)}, we prove that all its cluster points belong to
% $S$. From $\n{z^{k+1}-\z^k}\to 0$ and the Next, from we might conclude that the sequence $((1+\varphi)\n{\z^k-z^*}^2 + \frac{\varphi}{2}\n{z^k-z^{k-1}})$ is non-inreasing and, hence, it is convergent.
From \eqref{eq:9} we have that $\bigl((1 + \varphi)\n{\z^{k+1} - z^*}^2 + \frac{\varphi}{2}\n{z^{k+1} - z^k}^2\bigr)$ is bounded and so is  $(\z^k)$, and
$\lim_{k\to \infty} \n{z^k-\z^k} = 0$. Hence, $(z^k)$ has at least one
cluster point. By definition of $\z^k$, $z^{k+1}-\z^{k} = \varphi (z^{k+1}-\z^{k+1}) \to 0$ and, hence, $z^{k+1}-z^k\to 0$
as well.  Taking the limit in \eqref{eq:4} (going to the subsequences
if needed) and using \hyperref[C2]{(C2)}, we prove that all cluster
points of $(z^k)$ (and thus, of $(\z^k)$) belong to $S$.  From
\eqref{eq:9} one can see that the sequence
$\bigl((1+\varphi)\n{\z^k-z^*}^2 + \frac{\varphi}{2}\n{z^k-z^{k-1}}\bigr)$ is
non-increasing and, hence, it is convergent. As
$\lim_{k\to \infty}\n{z^k-z^{k-1}} = 0$, there must exist
$\lim_{k\to \infty}\n{\z^k - z^*}$. Since $z^*$ is an arbitrary point
in $S$, from Lemma~\ref{l:fejer} it follows that $(\z^k)$ and $(z^k)$
converge to a point in $S$.   
\end{proof}

Notice that the constant $\varphi$ is chosen not arbitrary, but as the
largest constant $c$ that satisfies $\frac 1 c \geq c - 1$ in order to
get rid of the term $\n{z^{k+1}-\z^k}^2$ in \eqref{eq:7}. It is interesting to compare
the proposed GRAAL with the reflected projected (proximal) gradient
method \cite{malitsky15}. At a first glance, they are quite similar:
both need one $F$ and one $\prox_g$ per iteration. The advantage of
the former, however, is that $F$ is computed at $z^k$, which is always
feasible $z^k\in \dom g$, due to the properties of the proximal
operator. In the reflected projected (proximal) gradient method $F$ is
computed at $2z^k - z^{k-1}$ which might be infeasible. Sometimes, as
it will be illustrated in Sect.~\ref{sec:numer-exper}, this can be
important.

\subsection{Adaptive Golden Ratio Algorithm}
\label{sec:linesearch}

In this section we introduce our fully adaptive algorithm.  The
algorithm preserves the same computational cost per iteration as
\eqref{graal-1} (i.e., no linesearch) and the stepsizes approximate an
inverse local Lipschitz constant of $F$. Furthermore, for our
purposes, the locally Lipschitz continuity of $F$ is sufficient.  The
algorithm, which we call the adaptive Golden Ratio Algorithm (aGRAAL),
is presented as Alg.~\ref{alg:egraal}. For simplicity, we adopt the
convention $\frac{0}{0} = +\infty$.

\begin{algorithm}[!ht]\caption{\textit{Adaptive Golden Ratio Algorithm
    }}
    \label{alg:egraal}
\begin{algorithmic}
   \STATE {\bfseries Input:}
Choose $z^0, z^1 \in \E$, $\la_0>0$, $\phi \in (1,\varphi]$, $\bar \la
>0$. Set $\z^0 = z^1$, $\th_0 = 1$, $\rho = \frac{1}{\phi} + \frac{1}{\phi^2}$.
\smallskip
\STATE {\bfseries For $k\geq 1 $ do}
\STATE \quad  1. Find the stepsize:
\begin{equation}
    \label{la}
    \la_k = \min\Bigl\{ \rho \la_{k-1},\ \frac{\phi
    \th_{k-1}}{4\la_{k-1}}\frac{\n{z^k-z^{k-1}}^2}{\n{F(z^k)-F(z^{k-1})}^2},\
    \bar \la\Bigr\}
\end{equation}
\STATE \quad 2. Compute the  next iterates:
\begin{align}
  \z^k & = \frac{(\phi -1)z^k+ \z^{k-1}}{\phi} \label{zk}\\
z^{k+1}& = \prox_{\la_k g}(\z^{k}-\la_k F(z^k)). \label{zk+1}
\end{align}

\STATE \quad 3. Update: $\th_k = \dfrac{\la_k}{\la_{k-1}} \phi $.
\end{algorithmic}
\end{algorithm}

Notice that in the $k$-th iteration we need only to compute $F(z^k)$
once and reuse the already computed value $F(z^{k-1})$. The constant
$\bar \la$ in \eqref{la} is given only to ensure that $(\la_k)$ is
bounded. Hence, it makes sense to choose $\bar \la$ quite large.
Although $\la_0$ can be arbitrary from the theoretical point of view,
from \eqref{la} it is clear that $\la_0$ will influence further steps,
so in practice we do not want to take it too small or too large. The
simplest way is to choose $z^0$ as a small perturbation of $z^1$ and
take $\la_0 = \frac{\n{z^1-z^0}}{\n{F(z^1)-F(z^0)}}$. This gives us an
approximation\footnote{We assume that $F(z^1)\neq F(z^0)$, otherwise
    choose another $z^0$.} of the local inverse Lipschitz constant of
$F$ at $z^1$.

As one can see from Alg.~\ref{alg:egraal}, for $\phi< \varphi$ one has
$\rho>1$ and hence, $\la_k$ can be larger than $\la_{k-1}$. This is
probably the most important feature of the proposed algorithm. When
$F$ is Lipschitz, there are some other methods without linesearch that
do not require to know the Lipschitz constant,
e.g.,\cite{yang2018modified}. However, such methods use nonincreasing
stepsizes, which is rather restrictive.

Condition~\eqref{la} leads to two key estimations. First, one has
$\la_k \leq \la_{k-1} (\frac{1}{\phi}+\frac{1}{\phi^2})$, which in
turn implies  $\th_k -1 -\frac{1}{\phi}\leq 0$. Second, from $\frac{\phi
\th_{k-1}}{\la_{k-1}} = \frac{\th_k \th_{k-1}}{\la_k}$ one can derive
\begin{equation}
    \label{la-est2}
    \la_k^2\n{F(z^k)-F(z^{k-1})}^2 \leq \frac{\th_k \th_{k-1}}{4} \n{z^k-z^{k-1}}^2.
\end{equation}
%%%%%%%%%%%%%%%%%%%%%%%%%%%%%%%%%

Alg.~\ref{alg:egraal} is quite generic, and in order to simplify it we
can predefine some of the constants. Let $\phi = \frac 3 2$. Then
$\rho = \frac{10}{9}$ and as
$\th_{k-1} = \frac{\la_{k-1}}{\la_{k-2}}\phi$, we have
$\frac{\phi \th_{k-1}}{4\la_{k-1}} = \frac{9}{16\la_{k-2}}$. It is
easy to see that with such choice of constants, Alg.~\ref{alg:egraal}
reduces to the one we presented in the introduction.
% \begin{framed}
% \begin{equation*} %  \label{eq:simple_alg}
%     \begin{aligned}
%    \la_k &= \min\Bigl\{ \frac{10}{9}\la_{k-1},\ \frac{9}{16 \la_{k-2}}\frac{\n{z^k-z^{k-1}}^2}{\n{F(z^k)-F(z^{k-1})}^2},\
%     \bar \la\Bigr\}\\
%   \z^k & = \frac{z^k+ 2 \z^{k-1}}{3} \\
% z^{k+1}& = \prox_{\la_k g}(\z^{k} - \la_k F(z^k)),
%     \end{aligned}
% \end{equation*}
% where $z^0,z^1\in \E$, $\z^0 = z^1$, $\la_0=\la_{-1}>0$, $\bar \la
% >0$.
% \end{framed}

%%%%%%%%%%%%%%%%%%%%%%%%%%%%%%%% 
\begin{lemma}\label{l:separ_la1}
  Suppose that $F\colon \dom g \to \E$ is locally Lipschitz continuous. If the sequence
  $(z^k)$ generated by Alg.~\ref{alg:egraal} is bounded, then both
  $(\la_k)$ and $(\th_k)$ are bounded and separated from $0$.
\end{lemma}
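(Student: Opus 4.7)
The plan is to handle the upper bounds first, since they fall out directly from the structure of the update, and then focus on the lower bounds, which is where the boundedness of $(z^k)$ and the local Lipschitz continuity of $F$ come in. For the upper bound, I will read $\la_k \leq \bar\la$ off \eqref{la} directly; then, from $\la_k \leq \rho\la_{k-1}$ (the first estimation noted after the algorithm), the definition $\th_k = \phi\la_k/\la_{k-1}$ gives $\th_k \leq \rho\phi = 1 + 1/\phi$.

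For the lower bound on $(\la_k)$ I will invoke the boundedness of $(z^k)$: since the iterates lie in a bounded set, local Lipschitz continuity of $F$ supplies a single constant $L>0$ with $\n{F(z^k)-F(z^{k-1})} \leq L\n{z^k-z^{k-1}}$ for all $k$. This bounds the second term in the min in \eqref{la} from below by $\phi\th_{k-1}/(4L^2\la_{k-1})$. The key algebraic move will be to use the update rule $\th_{k-1} = \phi\la_{k-1}/\la_{k-2}$ so that the awkward $\la_{k-1}$-dependence cancels: $\th_{k-1}/\la_{k-1} = \phi/\la_{k-2}$. The second-term lower bound then becomes $\phi^2/(4L^2\la_{k-2})$, which by the already-established $\la_{k-2}\leq\bar\la$ is at least the uniform constant $\phi^2/(4L^2\bar\la)$.

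The first term in the min is $\rho\la_{k-1}$, and since $\rho\geq 1$ whenever $\phi\leq\varphi$, it exceeds $\la_{k-1}$. Thus $\la_k \geq \min\{\la_{k-1},\, \phi^2/(4L^2\bar\la),\, \bar\la\}$, and a straightforward induction on $k$ will close to $\la_k \geq c := \min\{\la_0,\,\la_1,\, \phi^2/(4L^2\bar\la)\} > 0$. The lower bound $\th_k = \phi\la_k/\la_{k-1} \geq \phi c/\bar\la$ will then be immediate from the upper bound on $\la_{k-1}$. The only delicate point — and the place I expect to have to be careful — is spotting the cancellation $\th_{k-1}/\la_{k-1} = \phi/\la_{k-2}$; without it the lower bound on $\la_k$ still contains $\la_{k-1}$, and the induction fails to close to a $k$-independent constant.
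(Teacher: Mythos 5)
Your proposal is correct and follows essentially the same route as the paper: both rest on the cancellation $\frac{\phi\th_{k-1}}{4\la_{k-1}}=\frac{\phi^2}{4\la_{k-2}}$, the uniform Lipschitz constant $L$ supplied by boundedness of $(z^k)$, and an induction showing $\la_k\geq \min\{\la_{k-1},\,\phi^2/(4L^2\bar\la),\,\bar\la\}$ closes to a $k$-independent constant. The only cosmetic difference is that the paper absorbs the base cases by enlarging $L$ so that $\la_0,\la_1\geq \phi^2/(4L^2\bar\la)$, while you fold $\la_0,\la_1$ into the constant $c$; both are fine.
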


\begin{proof}
    It is obvious that $(\la_k)$ is bounded. Let us prove by induction
    that it is separated from $0$. As $(z^k)$ is bounded there exists
    some $L>0$ such that
    $\n{F(z^k)-F(z^{k-1})}\leq L \n{z^k-z^{k-1}}$. Moreover, we can
    take $L$ large enough to ensure that
    $\la_i\geq \frac{\phi^2}{4L^2 \bar \la}$ for $i=0,1$. Now suppose
    that for all $i=0,\dots k-1$,
    $\la_{i}\geq \frac{\phi^2}{4L^2 \bar \la}$. Then we have either
    $\la_k = \rho\la_{k-1} \geq \la_{k-1} \geq \frac{\phi^2}{4L^2 \bar \la}$ or
\begin{equation}
    \la_k =
    \frac{\phi^2}{4\la_{k-2}}\frac{\n{z^k-z^{k-1}}^2}{\n{F(z^k)-F(z^{k-1})}^2}\geq
    \frac{\phi^2}{4\la_{k-2}L^2} \geq \frac{\phi^2}{4L^2 \bar \la}.
\end{equation}
Hence, in both cases $\la_{k}\geq \frac{\phi^2}{4L^2 \bar \la}$. The
claim that $(\th_k)$ is bounded and separated from $0$ now follows
immediately.  
\end{proof}

Define the bifunction $\Psi(u,v):=\lr{F(u),v-u}+g(v)-g(u)$.  It is
clear that \eqref{vip} is equivalent to the following equilibrium
problem: find $z^*\in \E$ such that $\Psi(z^*,z)\geq 0$
$\forall z\in \E$. Notice that for any fixed $z$, the function
$\Psi(z, \cdot)$ is convex.

\begin{theorem}\label{th:egraal}
    Suppose that $F\colon \dom g \to \E$ is locally Lipschitz and
    conditions \hyperref[C1]{(C1)--(C3)} are satisfied. Then $(z^k)$
    and $(\z^k)$, generated by Alg.~\ref{alg:egraal}, converge to a
    solution of~\eqref{vip}.
\end{theorem}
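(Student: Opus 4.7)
The strategy is to mimic the proof of Theorem~\ref{th:graal} while carefully tracking the varying stepsize. I would begin by writing prox-inequalities analogous to \eqref{eq:4}--\eqref{eq:5} at iterations $k$ and $k-1$, using $\la_k$ and $\la_{k-1}$ respectively. To reconcile the two, I would multiply the one at $k-1$ by $\la_k/\la_{k-1} = \th_k/\phi$, so that both inequalities share the common factor $\la_k$ in front of $F$. Using the identity $z^k - \z^{k-1} = \phi(z^k - \z^k)$ (still valid for general $\phi$) and summing produces an inequality parallel to~\eqref{eq:summation}, except that the constant $\varphi$ is replaced by $\th_k$ and the cross-term is $\la_k\lr{F(z^k)-F(z^{k-1}), z^k-z^{k+1}}$. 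Taking $z=z^*\in S$ and invoking \hyperref[C3]{(C3)} removes the monotone part exactly as in~\eqref{eq:monot-est}.

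Next, I would convert inner products to norms via the polarization identity and bound the cross-term using \eqref{la-est2}, which gives $\la_k\n{F(z^k)-F(z^{k-1})}\leq \tfrac{\sqrt{\th_k\th_{k-1}}}{2}\n{z^k-z^{k-1}}$, followed by the elementary $2ab\leq a^2+b^2$ with $a=\sqrt{\th_{k-1}}\n{z^k-z^{k-1}}$, $b=\sqrt{\th_k}\n{z^{k+1}-z^k}$. In parallel, I would use the identity $z^{k+1} = \tfrac{\phi}{\phi-1}\z^{k+1} - \tfrac{1}{\phi-1}\z^k$ together with~\eqref{eq:id} to produce the analogue of~\eqref{eq:identity}. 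After collecting terms, the coefficient of $\n{z^{k+1}-\z^k}^2$ comes out exactly to $\th_k - 1 - \tfrac{1}{\phi}$, which is $\leq 0$ by the first key consequence of~\eqref{la}. The end result is a Fejér-type recursion
\[
  \Bigl(1+\tfrac{1}{\phi-1}\Bigr)\n{\z^{k+1}-z^*}^2 + \tfrac{\th_k}{2}\n{z^{k+1}-z^k}^2 \leq \Bigl(1+\tfrac{1}{\phi-1}\Bigr)\n{\z^{k}-z^*}^2 + \tfrac{\th_{k-1}}{2}\n{z^{k}-z^{k-1}}^2 - \th_k\n{z^k-\z^k}^2,
\]
which is the direct analogue of~\eqref{eq:9}.

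From this point the argument is largely routine. Boundedness of $(\z^k)$ is immediate, and since $z^k = \tfrac{1}{\phi-1}(\phi\z^k - \z^{k-1})$, boundedness of $(z^k)$ follows; Lemma~\ref{l:separ_la1} then guarantees that $(\la_k)$ and $(\th_k)$ are bounded and separated from $0$. Telescoping yields $\sum_k \th_k\n{z^k-\z^k}^2<\infty$, so $\n{z^k-\z^k}\to 0$, and then $\n{z^{k+1}-z^k}\to 0$ via $z^{k+1}-\z^k = \phi(z^{k+1}-\z^{k+1})$. Passing to the limit along a subsequence in the prox-inequality (using lsc of $g$, and the fact that a further subsequence of $(\la_{k})$ can be chosen to converge to a positive limit) shows every cluster point of $(z^k)$, equivalently of $(\z^k)$, belongs to $S$. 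Finally, since $\n{z^k-z^{k-1}}\to 0$ and $(\th_{k-1})$ is bounded, the displayed recursion forces $\lim_k\n{\z^k-z^*}$ to exist for each $z^*\in S$, and Lemma~\ref{l:fejer} concludes convergence of $(\z^k)$ (hence of $(z^k)$) to a point of $S$.

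The main technical obstacle is the second step: producing the precise weighted Fejér sequence so that the troublesome $\n{z^{k+1}-\z^k}^2$ term is annihilated exactly by $\th_k - 1 - \tfrac{1}{\phi}\leq 0$. This is where the particular form of the stepsize rule~\eqref{la}, and in particular the choice $\rho = \tfrac{1}{\phi}+\tfrac{1}{\phi^2}$, becomes essential; any looser bound on the growth of $\la_k$ would leave a positive multiple of $\n{z^{k+1}-\z^k}^2$ on the wrong side.
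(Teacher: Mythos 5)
Your proposal is correct and follows essentially the same route as the paper's own proof: the weighted prox-inequalities with the rescaling by $\la_k/\la_{k-1}$, the identity $z^k-\z^{k-1}=\phi(z^k-\z^k)$, the bound \eqref{la-est2} on the cross-term, and the resulting Fej\'er-type recursion \eqref{ls:eq:11} are all exactly the paper's steps, with the same use of Lemma~\ref{l:separ_la1} and Lemma~\ref{l:fejer} to conclude. The only cosmetic difference is that you set $z=z^*$ immediately, whereas the paper carries a general $z$ (via the bifunction $\Psi$) so the same inequality can be reused for the ergodic rate.
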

\begin{proof}
    Let $z\in \E$ be arbitrary.  By the prox-inequality
    \eqref{prox_ineq} we have
\begin{align}
   \lr{z^{k+1}-\z^k+\la_k F(z^k), z - z^{k+1}}  & \geq \la_k
    (g(z^{k+1})-g(z))  \label{ls:eq:4}\\
  \lr{z^{k}-\z^{k-1}+\la_{k-1} F(z^{k-1}), z^{k+1} - z^{k}}  &\geq  \la_{k-1}( g(z^{k})-g(z^{k+1}))\    \label{ls:eq:5}.
\end{align}

Multiplying \eqref{ls:eq:5} by $\frac{\la_k}{\la_{k-1}} \geq
0$ and using that $\frac{\la_k}{\la_{k-1}}  (z^k-\z^{k-1})=\th_k(z^k-\z^{k})$, we obtain
\begin{equation}
    \label{ls:eq:6}
    \lr{\th_k(z^{k}-\z^{k})+\la_k F(z^{k-1}), z^{k+1} - z^{k}}\geq
    \la_k ( g(z^{k})-g(z^{k+1})).
\end{equation}
Summation of~\eqref{ls:eq:4} and~\eqref{ls:eq:6} gives us
\begin{align}
    \label{ls:eq:7}
    \lr{z^{k+1}-\z^k, z - z^{k+1}} & + \th_k \lr{z^{k}-\z^{k}, z^{k+1}
        -z^{k}}  + \la_k \lr{F(z^k)-F(z^{k-1}), z^k-z^{k+1}} \nonumber
  \\ & \geq  \la_k \lr{F(z^k), z^k-z} + \la_k (g(z^{k})-g(z))
       \nonumber \\ & \geq \la_k [
    \lr{F(z), z^k-z} + g(z^{k})-g(z)] = \la_k \Psi(z, z^k)    .
\end{align}
Expressing the first two terms in~\eqref{ls:eq:7} through norms, we derive
\begin{align}
    \label{ls:eq:8}
  \n{z^{k+1}-z}^2 \leq \n{\z^k-z}^2 & - \n{z^{k+1}-\z^k}^2
                                      +2\la_k\lr{F(z^k)-F(z^{k-1}),z^k-z^{k+1}}\nonumber
  \\  & +  \th_k \bigl(\n{z^{k+1}-\z^k}^2 -
  \n{z^{k+1}-z^k}^2-\n{z^k-\z^k}^2\bigr) - 2\la_k \Psi(z, z^k).
\end{align}  
Similarly to~\eqref{eq:identity}, we have
\begin{equation}\label{ls:eq:identity}
    \n{z^{k+1}-z}^2 = \frac{\phi}{\phi-1}\n{\z^{k+1}-z}^2 -
    \frac{1}{\phi-1}\n{\z^{k}-z}^2+\frac{1}{\phi}\n{z^{k+1}-\z^k}^2.
\end{equation}
Combining this with~\eqref{ls:eq:8}, we obtain
\begin{multline}
    \label{ls:eq:9}
    \frac{\phi}{\phi -1 } \n{\z^{k+1}-z}^2  \leq  \frac{\phi}{\phi -1 }
    \n{\z^{k}-z}^2 + \bigl(\th_k - 1 -
    \frac{1}{\phi}\bigr) \n{z^{k+1}-\z^{k}}^2   - 2\la_k \Psi(z, z^k) \\
    -  \th_k \bigl(\n{z^{k+1}-z^k}^2 + \n{z^k-\z^k}^2\bigr) +
    2\la_k\lr{F(z^k)-F(z^{k-1}),z^k-z^{k+1}}.
\end{multline}
Recall that $\th_k\leq 1 + \frac 1 \phi$. Using \eqref{la-est2}, the
rightmost term in~\eqref{ls:eq:9} can be estimated as
\begin{multline}
    \label{ls:eq:10}
     2\la_k\lr{F(z^k)-F(z^{k-1}),z^k-z^{k+1}} \leq
     2\la_k\n{F(z^k)-F(z^{k-1})}\n{z^k-z^{k+1}} \\ \leq \sqrt{\th_k
     \th_{k-1}}\n{z^k-z^{k-1}}\n{z^k-z^{k+1}} \leq
     \frac{\th_k}{2} \n{z^{k+1}-z^k}^2 +  \frac{\th_{k-1}}{2} \n{z^{k}-z^{k-1}}^2.
\end{multline}
Applying the obtained estimation to~\eqref{ls:eq:9}, we deduce
\begin{multline}
    \label{ls:eq:11}
    \frac{\phi}{\phi -1 } \n{\z^{k+1}-z}^2  +
    \frac{\th_k}{2}\n{z^{k+1}-z^k}^2 +  2\la_k \Psi(z, z^k) \\ \leq  \frac{\phi}{\phi -1 }
    \n{\z^{k}-z}^2 +  \frac{\th_{k-1}}{2}\n{z^{k}-z^{k-1}}^2
 -\th_k \n{z^k-\z^k}^2.
\end{multline}
Iterating the above inequality, we derive
\begin{multline}
    \label{ls:eq:12}
    \frac{\phi}{\phi -1 } \n{\z^{k+1}-z}^2 +
    \frac{\th_k}{2}\n{z^{k+1}-z^k}^2 + \sum_{i=2}^k\th_i
    \n{z^i-\z^{i}}^2 + 2\sum_{i=1}^k \la_k \Psi(z, z^k) \\ \leq
    \frac{\phi}{\phi -1 } \n{\z^{2}-z}^2 +
    \frac{\th_{1}}{2}\n{z^{2}-z^{1}}^2 -\th_2 \n{z^2-\z^2}^2 + 2\la_1
    \Psi(z, z^1).
\end{multline}
Let $z=z^*\in S$. Then the last term in the left-hand side of
\eqref{ls:eq:12} is nonnegative.  This yields that $(\z^k)$, and hence
$(z^k)$, is bounded, and $\th_k \n{z^k-\z^{k}}\to 0$. Now we can apply
Lemma~\ref{l:separ_la1} to deduce that
$\la_k\geq \frac{\phi^2}{4L^2 \bar{\la}}$ and $(\th_k)$ is separated
from zero. Thus, $\lim_{k\to \infty}\n{z^k-\z^k} = 0$, which implies
$z^k-\z^{k-1} \to 0$ and thus, also $z^{k+1}-z^k \to 0$. Let us show that all cluster points of $(z^k)$
and $(\z^k)$ belong to $S$. This is proved in the standard way. Let
$(k_i)$ be a subsequence such that $z^{k_i}\to \tilde z$ and
$\la_{k_i}\to \la >0$ as $i\to \infty$. Clearly,
$z^{k_i+1}\to \tilde z$ and $\z^{k_i}\to \tilde z$ as well. Then
consider \eqref{ls:eq:4} indexed by $k_i$ instead of $k$.  Taking the
limit as $i\to \infty$ in it and using (C2), we obtain
\begin{equation}
    \la \lr{F(\tilde z), z - \tilde z}  \geq \la (g(\tilde z)-g(z))
    \quad \forall z\in \E.
\end{equation}
Hence,  $\tilde z \in S$. From \eqref{ls:eq:11} one can see that the sequence $\bigl(\frac{\phi}{\phi -1 }
    \n{\z^{k}-z}^2 +  \frac{\th_{k-1}}{2}\n{z^{k}-z^{k-1}}^2\bigr)$ is nonincreasing, hence $\lim_{k\to \infty}\n{\z^k-z}$ exists. As $z\in S$ is arbitrary, from Lemma~\ref{l:fejer} it follows that sequences $(\z^k)$, $(z^k)$ converge to some element in $S$. This completes the proof. 
\end{proof}

\begin{remark} \label{rem:particular}
For a variational inequality in the form~\eqref{vip_C}, condition~\hyperref[C1]{(C3)} can be relaxed to the following
\begin{enumerate}[  (C4)  ] \label{C4}
    \item   \qquad \qquad \qquad \qquad      $   \lr{F(z), z-\z}\geq 0 \quad \forall z\in C \quad \forall \z\in S$.
\end{enumerate}
This condition, used for example in \cite{solodov1999new}, is weaker
than the standard monotonicity assumption~\hyperref[C3]{(C3)} or
pseudomonotonicity assumption:
\begin{equation}
  \lr{F(v), u-v}\geq 0 \quad \Longrightarrow \quad \lr{F(u), u -v}\geq 0.
\end{equation}
It is straightforward to see that Theorems~\ref{th:graal}, \ref{th:egraal} hold under
\hyperref[C4]{(C4)}. In fact, in the proof of the theorems, we choose
$z = z^*\in S$ only to ensure that $\Psi(z^*, z^k)\geq 0$. However, it
is sufficient to show that the first left-hand side in~\eqref{ls:eq:7} is
nonnegative. But for $z = z^*\in S$ this is true, since by \hyperref[C4]{(C4)},
$\lr{F(z^k), z^k-z^*}\geq 0$.
\end{remark}

\subsection{Ergodic convergence}
It is known that many algorithms for monotone VI (or for more specific
convex-concave saddle point problems) exhibit an $O(1/k)$ rate of
convergence, see, for instance,
\cite{nemirovski2004prox,nesterov2007dual,monteiro2011complexity,pock:ergodic},
where such an ergodic rate was established. Moreover, Nemirovski has
shown in \cite{nemirovsky1992information,nemirovski2004prox} that this
rate is optimal. In this section we prove the same result for our
algorithm.  When the set $\dom g$ is bounded establishing such a rate
is a simple task for most methods, including aGRAAL, however the case
when $\dom g$ is unbounded has to be examined more carefully. To deal
with it, we use the notion of the restricted merit function, first
proposed in \cite{nesterov2007dual}.

Choose any $\tilde x \in \dom g$ and $r> 0$. Let $U =
\dom g \cap \mathbb{B}(\tilde x,r)$, where $\mathbb{B}(\tilde x,r)$
denotes a closed ball with center $\tilde x$ and radius $r$. Recall the bifunction we work with
    $\Psi(u,v):=\lr{F(u),v-u}+g(v)-g(u)$. The restricted merit (dual gap)
function is defined as
\begin{equation}
    \label{eq:gap}
    e_r(v) = \max_{u\in U} \Psi(u,v) \qquad \forall v \in \E.
\end{equation}
From \cite{nesterov2007dual} we have the following fact:
\begin{lemma}
    \label{lemma:gap}
    The function $e_r$ is well defined and convex on $\E$. For any
    $x\in U$, $e_r(x)\geq 0$. If $x^*\in U$ is a solution to
    \eqref{vip}, then $e_r(x^*) = 0$. Conversely, if $e_r(\hat x) = 0$
    for some $\hat x$ with $\n{\hat x - \tilde x} < r$, then $\hat x$
    is a solution of \eqref{vip}.
\end{lemma}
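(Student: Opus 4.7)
The plan is to handle the four claims in turn, exploiting only the structure of $\Psi(u,v)=\lr{F(u),v-u}+g(v)-g(u)$, monotonicity of $F$, and convexity of $g$.

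For \textbf{well-definedness and convexity}, I observe that for each fixed $u$, the map $v\mapsto \Psi(u,v)=\lr{F(u),v-u}+g(v)-g(u)$ is the sum of an affine function and the convex function $g$; hence it is convex. Taking the supremum over $u\in U$ preserves convexity, so $e_r$ is convex. The supremum is well-defined as an element of $(-\infty,+\infty]$: for $v\notin\dom g$ it is trivially $+\infty$, and for $v\in\dom g$ each $\Psi(u,v)$ is finite since $u\in U\subset \dom g$.

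For \textbf{$e_r(x)\geq 0$ when $x\in U$}, I simply plug $u=x$ into the supremum: $\Psi(x,x)=0$, so $e_r(x)\geq 0$.

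For \textbf{$e_r(x^*)=0$ when $x^*\in U$ solves \eqref{vip}}, it remains to show $e_r(x^*)\leq 0$. Fix $u\in U$. Monotonicity of $F$ gives $\lr{F(u)-F(x^*),u-x^*}\geq 0$, i.e.\ $\lr{F(u),x^*-u}\leq \lr{F(x^*),x^*-u}$. Combined with $g(x^*)-g(u)$ this yields
\begin{equation*}
\Psi(u,x^*)\leq \lr{F(x^*),x^*-u}+g(x^*)-g(u)=-\bigl[\lr{F(x^*),u-x^*}+g(u)-g(x^*)\bigr]\leq 0,
\end{equation*}
the last inequality coming from $x^*$ solving~\eqref{vip} (applied at $z=u$). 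Taking the supremum gives $e_r(x^*)\leq 0$.

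The \textbf{main obstacle is the converse in Part 4}: from $e_r(\hat x)=0$ and $\n{\hat x-\tilde x}<r$ deduce that $\hat x$ solves~\eqref{vip}. The strict inequality is essential and the reasoning is a local Minty-to-Stampacchia passage. First, $e_r(\hat x)<+\infty$ together with $\tilde x\in U$ forces $\hat x\in\dom g$ (otherwise $\Psi(\tilde x,\hat x)=+\infty$). Now fix an arbitrary $u\in\dom g$. Since $\hat x$ lies in the open ball of radius $r$ around $\tilde x$, there exists $t_0>0$ such that for all $t\in(0,t_0]$ the point $u_t:=\hat x+t(u-\hat x)$ belongs to $\mathbb{B}(\tilde x,r)$; convexity of $\dom g$ puts $u_t$ in $\dom g$, so $u_t\in U$. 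The hypothesis $e_r(\hat x)=0$ gives $\Psi(u_t,\hat x)\leq 0$, that is,
\begin{equation*}
-t\lr{F(u_t),u-\hat x}+g(\hat x)-g(u_t)\leq 0.
\end{equation*}
Convexity of $g$ yields $g(\hat x)-g(u_t)\geq t(g(\hat x)-g(u))$. Substituting and dividing by $t>0$:
\begin{equation*}
g(\hat x)-g(u)\leq \lr{F(u_t),u-\hat x}.
\end{equation*}
Letting $t\to 0^+$, continuity of $F$ (locally Lipschitz) gives $F(u_t)\to F(\hat x)$, so $\lr{F(\hat x),u-\hat x}+g(u)-g(\hat x)\geq 0$. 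Since $u\in\dom g$ was arbitrary (and the inequality is trivial for $u\notin\dom g$), $\hat x$ solves~\eqref{vip}. The only delicate point is ensuring $u_t\in U$ for all small $t$, which is exactly where the strict inequality $\n{\hat x-\tilde x}<r$ is used.
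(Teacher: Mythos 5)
Your proof is correct and follows the same route as the argument the paper delegates to Lemma 1 of Nesterov's dual-extrapolation paper: convexity via a supremum of convex functions, nonnegativity by testing $u=x$, vanishing at a solution via monotonicity of $F$, and the converse via the local Minty-to-Stampacchia passage $u_t=\hat x+t(u-\hat x)$, which is exactly where the strict inequality $\n{\hat x-\tilde x}<r$ and the convexity of $g$ (replacing the indicator $\d_C$) are needed. You have in effect written out the adaptation to general $g$ that the paper only asserts, and the details check out.
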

\begin{proof}
    The proof is almost identical to Lemma 1 in
    \cite{nesterov2007dual}. The only difference is that we have to
    consider  VI \eqref{vip} with a general $g$ instead of $\d_C$. 
\end{proof}

Now we can obtain something meaningful. Since $F$ is continuous and
$g$ is lsc, there exist some constant $M>0$ that majorizes the
right-hand side of \eqref{ls:eq:12} for all $z\in U$. From this follows that
$\sum_{i=1}^k \la_i \Psi(z, z^i) \leq M$ for all $z\in U$ (we ignore
the constant $2$ before the sum). Let $Z^k$ be the ergodic sequence:
$Z^k = \frac{\sum_{i=1}^k \la_i z_i}{\sum_{i=1}^k \la_i}$. Then using
convexity of $\Psi(z, \cdot)$, we obtain
\begin{align}\label{ergodic}
e_r(Z^k) = \max_{z \in U} \Psi(z, Z^k) \leq
  \frac{1}{\sum_{i=1}^k{\la_i}} \max_{z\in U} \bigl(\sum_{i=1}^k
  \Psi(z,z^i)\bigr) \leq \frac{M}{ \sum_{i=1}^k{\la_i}}.
\end{align}
Taking into account that $(\la_k)$ is separated from zero, we obtain
the $O(1/k)$ convergence rate for the ergodic sequence $(Z^k)$.

\begin{remark}\label{rem:ergodic}
    For the case of the composite minimization problem~\eqref{comp},
    instead of using the merit function it is simpler to use the
    energy residual: $J(z^k) - J(z^*)$. For this we need to
    use in~\eqref{ls:eq:7} that
\begin{align}
%    \label{erg:eq:7}
\la_k \lr{F(z^k), z^k-z} + \la_k (g(z^{k})-g(z))  & \geq \la_k [f(z^k) -
    f(z) + g(z^k) - g(z)] \nonumber \\ & = \la_k (J(z^k) - J(z)).
\end{align}
In this way, we may proceed analogously to obtain
%$\min_{i=1,\dots, k} (J(z^i)-J(z^*)) \leq \frac{M}{ \sum_{i=1}^k{\la_i}}$ and
%$J(Z^k) - J(z^*)\leq \frac{M}{ \sum_{i=1}^k{\la_i}}$.
\begin{equation}
    \min_{i=1,\dots, k} (J(z^i)-J(z^*)) \leq \frac{M}{
    \sum_{i=1}^k{\la_i}}\quad \text{and}\quad J(Z^k) - J(z^*)\leq \frac{M}{ \sum_{i=1}^k{\la_i}}.
\end{equation}
\end{remark}

\subsection{Linear convergence}
\label{sec:lin-conv}

For many VI methods it is possible to derive a linear convergence rate
under some additional assumptions. The most general tool for that is
the use of error bounds. For a survey of error bounds, we refer the
reader to~\cite{pang1997error} and for their applications to the VI
algorithms to \cite{tseng1995linear,solodov2003convergence}.

Let us briefly recall the terminology associated with  linear convergence. Suppose that $(u^k)\subset \E$ is a sequence that converges to $u\in \E$. We say that convergence is $Q$-linear, if there is $q \in (0,1)$ such that $\n{u^{k+1} - u} < q \n{u^k - u}$ for all $k$ large enough. We also say that convergence is $R$-linear, if for all $k$  large enough, $\n{u^k - u}\leq \gamma_k$ and $(\gamma_k)$ converges $Q$-linearly to zero.

Let us fix some $\la >0$ and define the natural residual
$r(z, \la):= z - \prox_{\la g}(z - \la F(z))$. Evidently, $z\in S$ if
and only if $r(z, \la) = 0$. We say that problem~\eqref{vip} satisfies
an error bound condition if there exist positive constants $\mu$ and $\eta$ such that
\begin{equation}
    \label{eq:error} \dist(z, S) \leq \mu \n{r(z, \la)} \qquad \forall
    x \quad \text{with}\quad \n{r(z, \la)}\leq  \eta.
\end{equation}
The function $\la \mapsto \n{r(z, \la)}$ is nondecreasing and $\la \mapsto \frac{\n{r(z, \la)}}{\la}$is nonincreasing  (see
\cite[Proposition 10.3.6]{pang:03}), and thus all natural residuals $r(\cdot, \la)$ are
equivalent. Hence the choice of $\la$ in the above definition is not
essential. No doubt, it is not an easy task to decide whether
\eqref{eq:error} holds for a particular problem. Several examples are
known, see for instance \cite{solodov2003convergence,pang:03} and it
is still an important and active area of research.

In the analysis below we are not interested in sharp constants, but
rather in showing the linear convergence for $(z^k)$. This will allow
us to keep the presentation simpler. For the same reason we assume that
$F$ is $L$-Lipschitz continuous.

Choose any $\la > 0$ such that $\la_k \geq \la$ for all $k$. Without
loss of generality, we assume that $\la$ is the same as in
\eqref{eq:error}. As $\la \mapsto \n{r(\cdot, \la)}$ is
nondecreasing and $\prox_{\la_k g}$ is nonexpansive, using the
triangle inequality, we obtain
\begin{align} \label{eq:chain}
    \n{r(\z^k, \la)} & \leq \n{r(\z^k, \la_k)} = \n{\prox_{\la_k
    g}(\z^k-\la_k F(\z^k)) - \z^k} \nonumber \\ & \leq  \n{\prox_{\la_k
    g}(\z^k-\la_k F(\z^k)) -\prox_{\la_k
    g}(\z^k-\la_k F(z^k))} + \n{z^{k+1}-\z^k} \nonumber \\ & \leq \la_k L
    \n{z^k-\z^k} + \n{z^{k+1}-z^k}+\n{z^{k}-\z^k}\nonumber  \\ & = (1+\la_k
    L)\n{z^k-\z^k} +\n{z^{k+1}-z^k},
\end{align}
From this it follows that
\begin{equation}
    \label{eq:error-ineq0}
    \n{r(\z^k, \la)}^2 \leq 2(1+\la_kL)^2 \n{z^k-\z^k}^2 + 2\n{z^{k+1}-z^k}^2.
\end{equation}
Let $\beta = \frac{\phi}{\phi -1}$.  If $(\th_k)$ is separated from
zero, then the above inequality ensures that for any $z^k, \z^k$  and
$\e \in (0,1)$ there exists $m\in (0,1)$ such that
\begin{equation}
    \label{eq:error-ineq}
    m \beta \mu^2 \n{r(\z^k, \la)}^2 \leq \th_k \n{z^k-\z^k}^2 + \th_k
    \frac{\e}{2} \n{z^{k+1}-z^k}^2.
\end{equation}
The presence of so many constants in \eqref{eq:error-ineq} will be
clear later.  In order to proceed, we have to modify
Alg.~\ref{alg:egraal}. Now instead of \eqref{la}, we choose the stepsize
by
\begin{equation}    \label{la-linear}
    \la_k = \min\Bigl\{ \rho \la_{k-1},\ \frac{\phi \d \th_{k-1}
    }{4\la_{k-1}}\frac{\n{z^k-z^{k-1}}^2}{\n{F(z^k)-F(z^{k-1})}^2},\
  \bar \la\Bigr\}, \qquad \d \in (0,1).
\end{equation}
This modification basically means that we slightly bound the
stepsize. However, this is not crucial for the steps, as we can choose
$\d$ arbitrary close to one.  An argument completely analogous to that
in the proof of Lemma~\ref{l:separ_la1} (up to the factor $\d$) shows that
both $(\la_k)$ and $(\th_k)$ are bounded and separated from zero. This
confirms correctness of our arguments about $(\la_k)$ and $(\th_k)$ in
\eqref{eq:chain} and \eqref{eq:error-ineq0}.  It should be also obvious
that Alg.~\ref{alg:egraal} with \eqref{la-linear} instead of \eqref{la}
has the same convergence properties, since \eqref{ls:eq:10} --- the
only place where this modification plays some role --- will be still
valid.  For any $\d \in (0,1)$ there exist $\e\in (0,1)$ and
$m\in (0,1)$ such that $\d = (1-\e) (1-m)$ and \eqref{eq:error-ineq} is
fulfilled for any $z^k, \z^k$.  Now using \eqref{la-linear}, one can
derive a refined version of \eqref{ls:eq:10}:
\begin{align}
     2\la_k & \lr{F(z^k)-F(z^{k-1}),z^k-z^{k+1}}  \leq
     2\la_k\n{F(z^k)-F(z^{k-1})}\n{z^k-z^{k+1}} \nonumber \\ & \leq \sqrt{\d \th_k
     \th_{k-1}}\n{z^k-z^{k-1}}\n{z^k-z^{k+1}} \nonumber \\ & \leq
     \frac{(1-\e)\th_k}{2} \n{z^{k+1}-z^k}^2  +  \frac{(1-m)\th_{k-1}}{2} \n{z^{k}-z^{k-1}}^2.
 \end{align}
With this inequality, instead of \eqref{ls:eq:11}, for $z=z^*\in S$ we have
\begin{align}
    \label{ls:eq:11-linear} 
    \quad  & \b \n{\z^{k+1}-z^*}^2  +
    \frac{\th_k}{2} \n{z^{k+1}-z^k}^2 +  2\la_k \Psi(z^*, z^k)
     \nonumber \\ \leq \  & \b
    \n{\z^{k}-z^*}^2 +  \frac{\th_{k-1}(1-m)}{2}\n{z^k - z^{k-1}}^2 - \th_k
    \n{z^k-\z^k}^2 - \th_k \frac{\e}{2} \n{z^{k+1}-z^k}^2  \nonumber
  \\  \leq \ & \b
    \n{\z^{k}-z^*}^2 +  \frac{\th_{k-1}(1-m)}{2}\n{z^k - z^{k-1}}^2 - m \b \mu^2
    \n{r(\z^k, \la)}^2,
\end{align}
where in the last inequality we have used \eqref{eq:error-ineq}. As
$(\z^k)$ converges to a solution, $r(\z^k, \la)$ goes to $0$, and
hence $\n{r(\z^k, \la)} \leq \eta$ for all $k\geq k_0$. Setting
$z^*=P_S(\z^k)$ in \eqref{ls:eq:11-linear} and using~\eqref{eq:error}
and that $\Psi(z^*,z^k)\geq 0$, we obtain
\begin{align}
   \b \dist(\z^{k+1},S)^2  & +
    \frac{\th_k}{2} \n{z^{k+1}-z^k}^2  \leq
    \b \n{\z^{k+1}-z^*}^2  +
    \frac{\th_k}{2} \n{z^{k+1}-z^k}^2 \nonumber \\ & \leq  (1-m)\bigl(\b
    \dist(\z^k, S)^2 +  \frac{\th_{k-1}}{2} \n{z^k - z^{k-1}}^2\bigr).
\end{align}
From this the $Q$-linear rate of convergence for the sequence
$\bigl(\dist(\z^k, S)^2 + \frac{\th_{k-1}}{2}\n{z^k - z^{k-1}}^2\bigr)$
follows. Since $(\th_k)$ is separated from zero, we conclude that
$\n{z^{k}-z^{k-1}}$ converges $R$-linearly and this immediately implies
that the sequence $(z^k)$ converges $R$-linearly.  We summarize the
obtained result in the following statement.
\begin{theorem}
    Suppose that conditions
    \hyperref[C1]{(C1)--(C3)} are satisfied, $F\colon \dom g \to V$ is $L$--Lipschitz and the error bound~\eqref{eq:error} holds. Then $(z^k)$, generated by Alg.~\ref{alg:egraal} with \eqref{la-linear} instead of \eqref{la}, converges to a solution of \eqref{vip} at least $R$-linearly.
\end{theorem}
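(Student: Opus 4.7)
My plan is to promote the descent-type inequality~\eqref{ls:eq:11} used for Theorem~\ref{th:egraal} into a strict contraction by exploiting the slack created by the factor $\delta\in(0,1)$ in the new stepsize rule~\eqref{la-linear}, and then use the error bound~\eqref{eq:error} to turn the resulting extra negative terms into a fraction of the Lyapunov function itself. First I would verify that the modified stepsize rule still yields $(\lambda_k)$ and $(\theta_k)$ bounded and separated from zero: the induction in Lemma~\ref{l:separ_la1} goes through verbatim, with the lower bound on $\lambda_k$ merely picking up an extra factor of $\delta$. Fix $\lambda>0$ below $\inf_k \lambda_k$, which I may also take equal to the stepsize appearing in the error bound. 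Using nonexpansiveness of $\prox_{\lambda_k g}$, $L$-Lipschitzness of $F$, and the monotonicity of $\lambda\mapsto\n{r(\cdot,\lambda)}$, a triangle inequality argument as in~\eqref{eq:chain} shows $\n{r(\z^k,\lambda)}^2\le 2(1+\lambda_k L)^2\n{z^k-\z^k}^2+2\n{z^{k+1}-z^k}^2$. Since $(\lambda_k)$ and $(\theta_k)$ are bounded, for any $\varepsilon,m\in(0,1)$ satisfying $\delta=(1-\varepsilon)(1-m)$ I can find a uniform constant making~\eqref{eq:error-ineq} hold for every pair $(z^k,\z^k)$.

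The central step is refining the Cauchy--Schwarz estimate~\eqref{ls:eq:10}. The factor $\delta$ in~\eqref{la-linear} improves~\eqref{la-est2} to $\lambda_k^2\n{F(z^k)-F(z^{k-1})}^2\le (\delta\theta_k\theta_{k-1}/4)\n{z^k-z^{k-1}}^2$, so
\[
2\lambda_k\lr{F(z^k)-F(z^{k-1}),z^k-z^{k+1}}\le\tfrac{(1-\varepsilon)\theta_k}{2}\n{z^{k+1}-z^k}^2+\tfrac{(1-m)\theta_{k-1}}{2}\n{z^k-z^{k-1}}^2.
\]
Plugging this into~\eqref{ls:eq:9} with $z=z^*=P_S(\z^k)\in S$ and noting $\Psi(z^*,z^k)\ge 0$ leaves the two negative terms $-\theta_k\n{z^k-\z^k}^2$ and $-(\theta_k\varepsilon/2)\n{z^{k+1}-z^k}^2$. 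By~\eqref{eq:error-ineq} these together dominate $m\beta\mu^2\n{r(\z^k,\lambda)}^2$, which by~\eqref{eq:error} dominates $m\beta\dist(\z^k,S)^2$ as soon as $\n{r(\z^k,\lambda)}\le\eta$. Theorem~\ref{th:egraal} guarantees that the latter holds for all $k\ge k_0$, so for such $k$ I obtain the contraction
\[
\beta\dist(\z^{k+1},S)^2+\tfrac{\theta_k}{2}\n{z^{k+1}-z^k}^2\le(1-m)\Bigl(\beta\dist(\z^k,S)^2+\tfrac{\theta_{k-1}}{2}\n{z^k-z^{k-1}}^2\Bigr).
\]

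This recursion delivers $Q$-linear convergence of the Lyapunov sequence $W_k:=\beta\dist(\z^k,S)^2+(\theta_{k-1}/2)\n{z^k-z^{k-1}}^2$ to zero. Since $(\theta_k)$ is separated from zero, both $\dist(\z^k,S)$ and $\n{z^k-z^{k-1}}$ are then dominated by $\sqrt{W_k}$, hence converge to zero $R$-linearly. Summation of a geometric series in $\n{z^k-z^{k-1}}$ across a tail transfers the $R$-linear rate to $(z^k)$ itself (whose limit in $S$ is already guaranteed by Theorem~\ref{th:egraal}). The main obstacle is the delicate tuning of the three constants $\delta,\varepsilon,m$: $\delta$ must be close enough to $1$ that the stepsize rule is barely affected and the analysis of Theorem~\ref{th:egraal} remains valid, yet strictly less than $1$ so that the factorisation $\delta=(1-\varepsilon)(1-m)$ leaves room both to absorb the residual bound~\eqref{eq:error-ineq0} (via $\varepsilon$) and to inject the error bound into the Lyapunov decrement (via $m$).
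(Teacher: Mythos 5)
Your proposal is correct and follows essentially the same route as the paper: the same modified stepsize with the slack factor $\delta=(1-\e)(1-m)$, the same refined Cauchy--Schwarz estimate, the same residual bound on $\n{r(\z^k,\la)}$ via the triangle inequality, the choice $z^*=P_S(\z^k)$, and the resulting contraction of the Lyapunov quantity $\b\dist(\z^k,S)^2+\frac{\th_{k-1}}{2}\n{z^k-z^{k-1}}^2$. The only cosmetic difference is that you spell out the geometric-series summation transferring the rate from $\n{z^k-z^{k-1}}$ to $(z^k)$, which the paper leaves implicit.
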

\section{Fixed point algorithms}
\label{sec:fixed}
Although in general it is a very standard way to formulate a VI
\eqref{vip} as a fixed point equation $x = \prox_g(x - F(x))$,
sometimes other way around might also be beneficial. In this section
we show how one can apply general algorithms for VI to find a fixed
point of some operator $T\colon \E \to \E$. Clearly, any fixed point
equation $x = Tx$ is equivalent to the equation $F(x) = 0$ with
$F=\id -T$. The latter problem is of course a particular instance of
\eqref{vip} with $g\equiv 0$. Hence, we can work under the assumptions
of Remark~\ref{rem:particular}.

By $\Fix T$ we denote the fixed point set of the operator
$T$. Although, with a slight abuse of notation, we will not use
brackets for the argument of $T$ (this is common in the fixed point
literature), but we continue doing that for the argument of $F$.

We are interested in the following classes of operators:
\begin{enumerate}[(a)]
    \item Firmly-nonexpansive:
    $$\n{Tx-Ty}^2\leq \n{x-y}^2 - \n{(x-Tx)-(y-Ty)}^2\quad \forall
    x,y\in \E.$$
    \item Nonexpansive:
    $$\n{Tx-Ty}\leq \n{x-y} \quad \forall x,y\in \E.$$

    \item Quasi-nonexpansive:
    $$\n{Tx-\x}\leq \n{x-\x} \quad \forall x\in \E, \forall \x\in \Fix T.$$

    \item Pseudo-contractive:
    $$\n{Tx-Ty}^2\leq \n{x-y}^2 + \n{(x-Tx)-(y-Ty)}^2\quad \forall
    x,y\in \E.$$

    \item Demi-contractive:
    $$
  \n{Tx-\x}^2\leq \n{x-\x}^2 + \n{x-Tx}^2\quad \forall     x\in \E,
  \forall \x\in \Fix T.
  $$
\end{enumerate}
We have the following obvious relations
\begin{equation}
\begin{cases}
    (a)\subset (b)\subset (c) \subset (e)\\
    (a)\subset (b)\subset (d) \subset (e).
\end{cases}
\end{equation}
Therefore, (e) is the most general class of the aforementioned
operators. Sometimes in the literature
\cite{qihou1987naimpally,berinde2007,naimpally1983extensions} the
authors consider operators that satisfy a more restrictive condition
\begin{equation}\label{rho} \n{Tx-\x}^2\leq \n{x-\x}^2 + \rho \n{x-Tx}^2\quad \forall x\in \E, \forall \x\in \Fix T \quad \rho \in [0,1]
\end{equation}
and call them $\rho$--demi-contractive for $\rho\in [0,1)$ and
hemi-contractive for $\rho = 1$. We consider only the most general
case with $\rho=1$, but still for simplicity will call them as
demi-contractive. It is also tempting to call the class (e) as
quasi-pseudocontractive by keeping the analogy between (b) and (c),
but this tends to be a bit confusing due to ``quasi-pseudo''. Notice
that for the case $\rho<1$ in \eqref{rho}, one can consider a relaxed
operator $S = \rho \id + (1-\rho)T$. It is easy to see that $S$
belongs to the class~(c) and $\Fix S = \Fix T$. However, with
$\rho = 1$, the situation becomes much more difficult.

When $T$ belongs to (a) or (b), the standard way to find a fixed point
of $T$ is by applying the celebrated Krasnoselskii-Mann scheme (see
e.g. \cite{baucomb}): $x^{k+1} = \a x^k + (1-\a)Tx^k$, where
$\a \in (0,1)$. The same method can be applied when $T$ is in
class (c), but to the averaged operator $S = \b \id + (1-\b)T$,
$\b \in (0,1)$, instead of $T$. However, things become more
difficult when we consider broader classes of operators.  In
particular, Ishikawa in \cite{ishikawa1974fixed} proposed an iterative
algorithm when $T$ is Lipschitz continuous and
pseudo-contractive. However, its convergence requires a  compactness
assumption, which is already too restrictive. Moreover, the
convergence is rather slow, since the scheme uses some auxiliary
slowly vanishing sequence (as in the subgradient method), also the
scheme uses two evaluations of $T$ per iteration. Later, this scheme
was extended in \cite{qihou1987naimpally} to the case when $T$ is
Lipschitz continuous and demi-contractive but with the same
assumptions as above.

Obviously, one can rewrite condition (e) as
\begin{equation}\label{fix:demi} \lr{Tx-x, \x-x}\geq 0 \quad \forall x\in \E,\, \forall \x \in \Fix T,
\end{equation} which means that the angle between vectors $Tx-x$ and $\x -x$ must always be nonobtuse.

We know that $T$ is pseudo-contractive if and only if $F$ is monotone~\cite[Example 20.8]{baucomb}. It is not more difficult to check
that $T$ is demi-contractive if and only if $F$ satisfies
\hyperref[C4]{(C4)}. In fact, in this case $S = \Fix T$, thus,
\eqref{fix:demi} and \hyperref[C4]{(C4)} are equivalent:
$\lr{F(x), x-\x} = \lr{x-Tx, x-\x} \geq 0$. The latter observation
allows one to obtain a simple way to find a fixed point of a
demi-contractive operator $T$.  In particular, in order to find a
fixed point of $T$, one can apply the aGRAAL. Moreover, since in our
case $g\equiv 0 $ and $\prox_g = \id$, \eqref{zk+1} simplifies to
\begin{align}
    \label{fix:zk+1} x^{k+1} = \x^k - \la_k F(x^k) & = (\x^k - \la_k
    x^k) + \la_k Tx^k \nonumber \\ & = \bigl(\frac{\phi -1}{\phi}-\la_k\bigr)x^k +
    \frac 1 \phi \x^{k-1} + \la_k Tx^k.
\end{align}
From the above it follows:
\begin{theorem}\label{th:fix} Let $T\colon \E \to \E$ be locally Lipschitz and demi-contractive operator. Define $F = \id - T$. Then the sequence $(x^k)$ defined by aGRAAL with \eqref{fix:zk+1} instead of \eqref{zk+1} converges to a fixed point of $T$.
\end{theorem}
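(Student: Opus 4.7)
The plan is to reduce the fixed point iteration to the adaptive Golden Ratio Algorithm applied to a particular VI of the form \eqref{vip_C}, and then invoke Theorem~\ref{th:egraal} together with Remark~\ref{rem:particular}. Everything hinges on matching hypotheses: the demi-contractivity of $T$ is exactly what lets us use the relaxed condition \hyperref[C4]{(C4)} in place of monotonicity \hyperref[C3]{(C3)}.

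First I would set $F = \id - T$ and $g \equiv 0$. Then \eqref{vip} becomes the equation $F(x)=0$ with solution set $S = \Fix T$, so that finding a fixed point of $T$ is the same as solving the associated VI. Assumption \hyperref[C2]{(C2)} is trivial, \hyperref[C1]{(C1)} is the standing hypothesis $\Fix T \neq \empty$, and local Lipschitz continuity of $F$ is inherited from that of $T$. To verify \hyperref[C4]{(C4)}, I would use the equivalent reformulation \eqref{fix:demi} of demi-contractivity: for every $x\in \E$ and every $\x \in \Fix T$,
\begin{equation*}
    \lr{F(x), x - \x} = \lr{x - Tx, x - \x} \geq 0,
\end{equation*}
which is precisely \hyperref[C4]{(C4)}.

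Next I would check that the stated recursion \eqref{fix:zk+1} is exactly what Alg.~\ref{alg:egraal} produces in this setting. Since $g \equiv 0$, we have $\prox_{\la_k g} = \id$, so \eqref{zk+1} collapses to $x^{k+1} = \x^k - \la_k F(x^k)$. Expanding $F = \id - T$ and substituting the definition \eqref{zk} of $\x^k$ (so that $\x^k = \frac{\phi-1}{\phi} x^k + \frac{1}{\phi}\x^{k-1}$), one obtains exactly \eqref{fix:zk+1}. Hence the sequence $(x^k)$ of the theorem is literally a sequence generated by aGRAAL applied to the VI above.

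With all hypotheses in place, I would invoke Theorem~\ref{th:egraal} in the form justified by Remark~\ref{rem:particular}, concluding that $(x^k)$ converges to a point of $S = \Fix T$. The main (and only) subtle point is the use of Remark~\ref{rem:particular}: full monotonicity \hyperref[C3]{(C3)} of $F$ generally fails for demi-contractive $T$, so the argument genuinely needs the weaker condition \hyperref[C4]{(C4)}. Once that is in hand, the proof is essentially a bookkeeping reduction and involves no new estimates.
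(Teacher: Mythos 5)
Your proposal is correct and follows exactly the route the paper takes: reduce to the VI with $F=\id-T$, $g\equiv 0$, observe that demi-contractivity of $T$ is equivalent to condition \hyperref[C4]{(C4)} via \eqref{fix:demi}, and apply Theorem~\ref{th:egraal} in the form covered by Remark~\ref{rem:particular}. The paper's own proof is a one-liner that relies on the preceding discussion for these verifications; you have simply written out the same bookkeeping explicitly.
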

\begin{proof} Since $F=\id -T$ is obviously locally Lipschitz, the proof is an immediate application of Theorem~\ref{th:egraal}. 
\end{proof}

\begin{remark}\label{r:km}
    The obtained theorem is interesting not only for the very general
    class of demi-contractive operators, but for a more limited class
    of non-expansive operators. The scheme \eqref{fix:zk+1} requires
    roughly the same amount of computations as the Krasnoselskii-Mann
    algorithm, but the former method defines $\la_k$ from the local
    properties of $T$, and hence, depending on the problem, it can be
    much larger than $1$.  Recently, there appeared some papers on
    speeding-up the Krasnoselskii-Mann (KM) scheme for nonexpansive
    operators \cite{themelis2016supermann,giselsson2016line}.  The
    first paper proposes a simple linesearch in order to accelerate
    KM. However, as it is common for all linesearch procedures, each
    inner iteration requires an evaluation of the operator $T$, which
    in the general case can eliminate all advantages of it. The second
    paper considers a more general framework, inspired by Newton
    methods. However, its convergence guarantees are more restrictive.
\end{remark}
The demi-contractive property can be useful when we want to analyze
the convergence of the iterative algorithm $x^{k+1} = Tx^k$ for some
operator $T$. It might happen that we cannot prove that $(x^k)$ is
Fej\'er monotone w.r.t.\ $\Fix T$, but instead we can only show that
$\n{x^{k+1}-\x}^2 \leq \n{x^{k}-\x}^2 + \n{x^{k+1}-x^k}^2$ for all
$\x \in \Fix T$. This estimation guarantees that $T$ is
demi-contractive and hence one can apply Theorem~\ref{th:fix} to obtain a
sequence that converges to $\Fix T$.

Finally, we can relax condition in (e) to the following
\begin{equation}\label{eq:e_relax}
    \n{Tx-P_Sx}^2\leq \n{x-P_Sx}^2 + \n{x-Tx}^2\quad \forall x\in \E,
\end{equation}
which in turn is equivalent to $\lr{F(x), x-P_Sx}\geq 0$ for all $x\in \E$. For instance, \eqref{eq:e_relax} might arise when we know that $F=\id - T$ satisfies a global error bound condition
\begin{equation}
    \label{eq:error2} \dist(x, S) \leq \mu \n{F(x)} = \mu \n{x-Tx}\qquad \forall x\in \E,
\end{equation}
and it holds that
\begin{equation}
    \label{eq:e_relax2} \n{Tx -\x}^2\leq (1+\e)\n{x-\x}^2 \qquad \forall x\in \E\,\, \forall \x \in S,
\end{equation}
where $\e>0$ is some constant. One can easily show that
\eqref{eq:e_relax} follows from \eqref{eq:error2} and
\eqref{eq:e_relax2}, whenever $\e < \frac{1}{\mu^2}$. The proof of
convergence for aGRAAL with this new condition will be almost the
same: in the $k$-th iteration instead of using arbitrary $x^*\in S$,
we have to set $x^* = P_Sx^k$. Of course, the global error bound
condition \eqref{eq:error2} is too restrictive and is difficult to
verify \cite[Chapter~6]{pang:03}. On the other hand, property~\eqref{eq:e_relax2}
is very attractive: it is often the case that for some iterative
scheme one can only show \eqref{eq:e_relax2}, which is not enough to
proceed further with standard arguments like the Krasnoselskii-Mann
theorem or Banach contraction principle. We believe it might be an
interesting avenue for further research to consider the above settings
with $\mu$ and $\e$ dependent on $x$ in order to eliminate the
limitation of \eqref{eq:error2}. Condition \eqref{eq:e_relax2} also
relates to the recent work \cite{luke2016quantitative} where the
generalization of nonexpansive operators for multivalued case was
considered.

\section{Generalizations}
\label{sec:general} This section deals with two
generalizations. First, we present aGRAAL in a general metric
settings. Although this extension should be straightforward for most
readers, we present it to make this work self-contained. Next, by
revisiting the proof of convergence for aGRAAL we relax the monotonicity condition to a new one and discuss its consequences.
\subsection{aGRAAL with different metrics}
\label{sub:gen_metric} Throughout section~\ref{sec:gold-ratio-algor} we have
been working in standard Euclidean metric $\lr{\cdot, \cdot}$ and
assumed that $F$ is monotone with respect to it. There are at least
two possible generalizations of how one can incorporate some metric
into Alg.~\ref{alg:egraal}. Firstly, the given operator $F$ may not be
monotone in metric $\lr{\cdot, \cdot}$, but it is so in
$\lr{\cdot, \cdot}_P$, induced by some symmetric positive definite
operator $P$. For example, the generalized proximal method
$z^{k+1} = (\id + P^{-1}G)^{-1}z^k $ for some monotone operator $G$
gives us the operator $T = (\id + P^{-1}G)^{-1}$ which is nonexpansive
in metric $\lr{\cdot,\cdot}_P$, and hence $F = \id -T$ is monotone in
that metric but is not so in $\lr{\cdot,\cdot}$. This is an important
example, as it incorporates many popular methods: ADMM,
Douglas-Rachford, PDHG, etc. Secondly, it is often desirable to
consider an auxiliary metric $\lr{\cdot,\cdot}_M$, induced by a symmetric
positive definite operator $M$, that will enforce faster
convergence. For instance, for saddle point problems, one may give
different weights for primal and dual variables, in this case one will
consider some diagonal scaling matrix $M$. The standard analysis of
aGRAAL (and this is common for other known methods) does not take into
account that $F$ is derived from a saddle point problem; it treats the
operator $F$ as a black box, and just use one stepsize $\la $ for both
primal and dual variables.  For example, the primal-dual hybrid
gradient algorithm~\cite{chambolle2011first}, which is a very popular
method for solving saddle point problems with a linear operator, uses
different steps $\t$ and $\s$ for primal and dual variables; and the
performance of this method drastically depends on the choice of these
constants. This should be kept in mind when one applies aGRAAL for
such problems.

Another possibility is if we already work in metric $\lr{\cdot,\cdot}_P$, then a good choice of the matrix $M$ can eliminate some undesirable computations, like computing the proximal operator in metric $\lr{\cdot, \cdot}_P$.  Of course, the idea to incorporate a specific metric to the VI algorithm for a faster convergence is not new and was considered, for example, in \cite{solodov1996modified,chen1997convergence,he1994new}. Our goal is to show that the framework of GRAAL can easily adjust to these new settings.

Let $M, P \colon \E \to \E$ be symmetric positive
definite operators. We consider two norms induces by $M$ and $P$
respectively
\begin{equation} \n{z}_{M} := \sqrt{\lr{z,z}_M} = \lr{Mz,z}^{1/2} \quad \text{and} \quad \n{z}_{P} = \sqrt{\lr{z,z}_P} = \lr{Pz,z}^{1/2}.
\end{equation}

For a symmetric positive definite operator $W$, we define the generalized proximal operator as $\prox_{g}^{W} = (\id + W^{-1} \partial g)^{-1}$.  Since now we work with the Euclidean metric $\lr{\cdot,\cdot}_P$, we have to consider a more general form of VI:
\begin{equation}
    \label{vip-gen} \text{find } \quad z^*\in \E \quad \text{ s.t. } \quad \lr{F(z^*), z-z^*}_P +g(z)-g(z^*) \geq 0 \quad \forall z\in \E,
\end{equation} where we assume that
\begin{enumerate}[ (D1) ]
    \label{D1}
    \item the solution set $S$ of \eqref{vip-gen} is nonempty.
    \item $g\colon \E \to (-\infty, +\infty]$ is a convex lsc function;
    \item $F\colon \dom g \to \E$ is $P$--monotone: $\lr{F(u)-F(v), u-v}_P\geq 0$ \quad $\forall u,v\in \dom g$.
\end{enumerate}

The modification of Alg.~\ref{alg:egraal} presented below assumes that the
matrices $M,P$ are already given. In this note we do not discuss how
to choose the matrix $M$, as it depends crucially on the problem
instance.

\begin{algorithm}[!ht]\caption{\textit{Adaptive Golden Ratio Algorithm for general metrics}}
    \label{alg:egraal-gen}
\begin{algorithmic}
    \STATE {\bfseries Input:} Choose $z^0, z^1 \in \E$, $\la_0>0$,
    $\phi \in (1,\varphi]$, $\bar \la >0$. Set $\z^0 = z^1$, $\th_0 = 1$,
    $\rho = \frac{1}{\phi} + \frac{1}{\phi^2}$.
    \STATE {\bfseries For $k\geq 1 $ do}
    \STATE \quad  1. Find the stepsize:
    \begin{equation} \label{fixed:la} \la_k = \min\Bigl\{ \rho
        \la_{k-1},\ \frac{\phi
        \th_{k-1}}{4\la_{k-1}}\frac{\n{z^k-z^{k-1}}^2_{MP}}{\n{F(z^k)-F(z^{k-1})}^2_{M^{-1}P}},\
        \bar \la\Bigr\}
\end{equation}
\STATE \quad  2. Compute the next iterates:
\begin{align}
  \z^k & = \frac{(\phi -1)z^k+ \z^{k-1}}{\phi}\\ z^{k+1}&
= \prox_{\la_k g}^{MP}(\z^{k}-\la_k M^{-1} F(z^k)).
\end{align}

\STATE \quad 3. Update: $\th_k = \phi \dfrac{\la_k}{\la_{k-1}}$.
\end{algorithmic}
\end{algorithm}

\begin{theorem}\label{th:egraal-gen} Suppose that $F\colon \dom g\to \E$ is locally
    Lipschitz continuous and conditions~\hyperref[D1]{(D1)--(D3)} are
    satisfied. Then $(z^k)$ and $(\z^k)$, generated by
    Alg.~\ref{alg:egraal-gen}, converge to a solution
    of~\eqref{vip-gen}.
\end{theorem}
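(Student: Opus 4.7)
The plan is to mirror the proof of Theorem~\ref{th:egraal} almost verbatim, systematically replacing the standard Euclidean inner product by $\lr{\cdot,\cdot}_{MP}$ for the geometric terms and by $\lr{\cdot,\cdot}_P$ for the terms involving $F$. Applying the characteristic property of $\prox_{\la_k g}^{MP}$ to the update $z^{k+1} = \prox_{\la_k g}^{MP}(\z^k - \la_k M^{-1}F(z^k))$ yields
\begin{equation*}
\lr{z^{k+1} - \z^k + \la_k M^{-1}F(z^k),\, z - z^{k+1}}_{MP} \geq \la_k(g(z^{k+1}) - g(z)),
\end{equation*}
and since (implicitly) $M$ and $P$ commute, $MPM^{-1} = P$, so the operator term reduces to $\la_k \lr{F(z^k), z-z^{k+1}}_P$. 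Writing the analogous inequality for iteration $k-1$, multiplying it by $\la_k/\la_{k-1}$, summing, and using $z^k - \z^{k-1} = \phi(z^k - \z^k)$ produces the exact analog of \eqref{ls:eq:7} with every inner product interpreted in the appropriate weighted metric.

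Next, I convert inner products to squared $MP$-norms via identity \eqref{eq:id}, apply $P$-monotonicity in the same manner as in \eqref{ls:eq:7} to bound $\la_k \lr{F(z^k), z^k-z}_P + \la_k(g(z^k)-g(z))$ from below by $\la_k \Psi(z,z^k)$ with $\Psi(u,v):=\lr{F(u), v-u}_P + g(v) - g(u)$, and use the affine representation $z^{k+1} = \frac{\phi}{\phi-1}\z^{k+1} - \frac{1}{\phi-1}\z^k$ to derive the $MP$-version of~\eqref{ls:eq:identity}. Combining these yields a weighted analog of~\eqref{ls:eq:9}.

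The one step needing genuine care is the Cauchy--Schwarz estimate for the operator coupling. The required mixed-norm inequality
\begin{equation*}
\lr{F(z^k)-F(z^{k-1}),\, z^k-z^{k+1}}_P \leq \n{F(z^k)-F(z^{k-1})}_{M^{-1}P}\, \n{z^k-z^{k+1}}_{MP}
\end{equation*}
follows from the factorization $\lr{Pa,b} = \lr{M^{-1/2}P^{1/2}a,\, M^{1/2}P^{1/2}b}$ together with standard Cauchy--Schwarz; this is precisely where commutativity of $M$ and $P$ is essential, since it is what makes both $MP$ and $M^{-1}P$ symmetric positive definite (hence legitimate as norms) and justifies the factorization. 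Coupling this inequality with the new stepsize rule \eqref{fixed:la} recovers the estimate $2\la_k\lr{F(z^k)-F(z^{k-1}), z^k-z^{k+1}}_P \leq \frac{\th_k}{2}\n{z^{k+1}-z^k}^2_{MP} + \frac{\th_{k-1}}{2}\n{z^k-z^{k-1}}^2_{MP}$, the weighted analog of~\eqref{ls:eq:10}.

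From here the remainder is routine and structurally identical to Theorem~\ref{th:egraal}: assembling the pieces gives a Lyapunov-type inequality analogous to~\eqref{ls:eq:11}; choosing $z = z^*\in S$ (so that $\Psi(z^*, z^k)\geq 0$) yields boundedness of $(\z^k)$ in the $MP$-metric together with $\th_k\n{z^k-\z^k}^2_{MP}\to 0$; Lemma~\ref{l:separ_la1} transfers without essential change because local Lipschitz continuity of $F$ survives passage to any equivalent norm, giving $(\la_k)$ and $(\th_k)$ separated from zero; cluster points of $(z^k)$ lie in $S$ by passing to the limit in the prox-inequality and using \hyperref[D1]{(D2)}; and Lemma~\ref{l:fejer} applied with the $MP$-norm delivers convergence of the full sequences $(z^k)$ and $(\z^k)$ to a point of $S$. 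The only genuine obstacle is the mixed-norm Cauchy--Schwarz step above; the rest is cosmetic.
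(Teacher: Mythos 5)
Your proposal is correct and follows essentially the same route as the paper's own proof: the prox-inequality in the weighted metric, the mixed-norm Cauchy--Schwarz estimate $\lr{F(z^k)-F(z^{k-1}), z^k-z^{k+1}}_P \leq \n{F(z^k)-F(z^{k-1})}_{M^{-1}P}\n{z^k-z^{k+1}}_{MP}$ combined with the stepsize rule \eqref{fixed:la}, and the identical Lyapunov argument. Your explicit observation that $M$ and $P$ must commute for $MP$ and $M^{-1}P$ to be symmetric positive definite (and hence to define norms) is a hypothesis the paper leaves implicit, but it does not alter the argument.
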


\begin{proof} Fix any $z^*\in S$.  By the prox-inequality~\eqref{prox_ineq} we have
\begin{align}  \lr{M(z^{k+1}-\z^k)+ \la_k F(z^k), P(z^* - z^{k+1})} &\geq \la_k (g(z^{k+1})-g(z^*)) \label{fixed:eq:4}\\ \lr{M(z^{k}-\z^{k-1})+\la_{k-1} F(z^{k-1}), P(z^{k+1} - z^{k})} &\geq \la_{k-1} (g(z^{k})-g(z^{k+1})) \label{fixed:eq:5}.
\end{align} Multiplying \eqref{fixed:eq:5} by $\frac{\la_k}{\la_{k-1}} \geq 0$ and using that $\frac{\la_k}{\la_{k-1}} (z^k-\z^{k-1})=\th_k(z^k-\z^{k})$, we obtain
\begin{equation}
    \label{fixed:eq:6} \lr{\th_kM(z^{k}-\z^{k})+\la_k F(z^{k-1}), P(z^{k+1} - z^{k})} \geq \la_k (g(z^{k})-g(z^*))
\end{equation} Summation of~\eqref{fixed:eq:4} and~\eqref{fixed:eq:6} gives us
\begin{align}
    \label{fixed:eq:7}
     \lr{z^{k+1}-\z^k, z^* - z^{k+1}}_{MP} & + \th_k \lr{z^{k}-\z^{k},
     z^{k+1} - z^{k}}_{MP} + \la_k \lr{F(z^k)-F(z^{k-1}),
     z^k-z^{k+1}}_P \nonumber \\ & \geq \la_k (\lr{F(z^k), z^k-z^*}_P + g(z^k) -
     g(z^*))\geq 0.
\end{align}
 Expressing the  first two terms in~\eqref{fixed:eq:7}
through norms, we derive
\begin{multline}
    \label{fixed:eq:8} \n{z^{k+1}-z^*}^2_{MP} \leq \n{\z^k-z^*}^2_{MP}
    - \n{z^{k+1}-\z^k}^2_{MP}+2\la_k\lr{F(z^k)-F(z^{k-1}),z^k-z^{k+1}}_P \\+ \th_k \Bigl(\n{z^{k+1}-\z^k}^2_{MP}  - \n{z^{k+1}-z^k}^2_{MP}-\n{z^k-\z^k}^2_{MP}\Bigr).
\end{multline} 
Similarly to~\eqref{eq:identity}, we have
\begin{equation}\label{fixed:eq:identity}
  \n{z^{k+1}-z^*}^2_{MP} = \frac{\phi}{\phi-1}\n{\z^{k+1}-z^*}^2_{MP} - \frac{1}{\phi-1}\n{\z^{k}-z^*}^2_{MP}   +\frac{1}{\phi}\n{z^{k+1}-\z^k}^2_{MP}.
\end{equation}
Combining this with~\eqref{fixed:eq:8}, we derive
% \begin{multline}
%     \label{fixed:eq:9} \frac{\phi}{\phi -1 } \n{\z^{k+1}-z^*}^2_{MP} \leq \frac{\phi}{\phi -1 } \n{\z^{k}-z^*}^2_{MP} + \n{z^{k+1}-\z^{k}}^2\bigl(\th_k - 1 - \frac{1}{\phi}\bigr) \\ - \th_k \bigl(\n{z^{k+1}-z^k}^2_{MP} + \n{z^k-\z^k}^2_{MP}\bigr) + 2\la_k\lr{F(z^k)-F(z^{k-1}),z^k-z^{k+1}}_P.
% \end{multline}
\begin{align}
  \label{fixed:eq:9} \frac{\phi}{\phi -1 } \n{\z^{k+1}-z^*}^2_{MP}
  \leq  &\frac{\phi}{\phi -1 } \n{\z^{k}-z^*}^2_{MP} + \bigl(\th_k - 1
          - \frac{1}{\phi}\bigr) \n{z^{k+1}-\z^{k}}^2\nonumber\\ &-\th_k
                                                                   \bigl(\n{z^{k+1}-z^k}^2_{MP}
                                                                   +
                                                                   \n{z^k-\z^k}^2_{MP}\bigr)
                                                                   \nonumber
  \\ &+ 2\la_k\lr{F(z^k)-F(z^{k-1}),z^k-z^{k+1}}_P.
\end{align} 
Notice that $\th_k\leq 1 + \frac 1 \phi$. Using \eqref{la-est2}, the rightmost term in~\eqref{fixed:eq:9} can be estimated as
\begin{align}
    \label{fixed:eq:10} 2\la_k\lr{F(z^k)-F(z^{k-1}),z^k-z^{k+1}}_P
    &\leq 2\la_k\n{F(z^k)-F(z^{k-1})}_{M^{-1}P} \n{z^k-z^{k+1}}_{MP}
  \nonumber \\
&\leq \sqrt{\th_k \th_{k-1}}\n{z^k-z^{k-1}}_{MP}
    \n{z^k-z^{k+1}}_{MP} \nonumber \\ &\leq \frac{\th_k}{2} \n{z^{k+1}-z^k}^2_{MP} + \frac{\th_{k-1}}{2} \n{z^{k}-z^{k-1}}^2_{MP}.
\end{align}
Applying the obtained estimation to~\eqref{fixed:eq:9}, we obtain
\begin{multline}
    \label{fixed:eq:11} \frac{\phi}{\phi -1 } \n{\z^{k+1}-z^*}^2_{MP} + \frac{\th_k}{2}\n{z^{k+1}-z^k}^2_{MP} \\ \leq \frac{\phi}{\phi -1 } \n{\z^{k}-z^*}^2_{MP} + \frac{\th_{k-1}}{2}\n{z^{k}-z^{k-1}}^2_{MP} -\th_k \n{z^k-\z^k}^2_{MP}.
\end{multline} It is obvious to see that the statement of
Lemma~\ref{l:separ_la1} is still valid for Alg.~\ref{alg:egraal-gen},
hence one can finish the proof by the same arguments as in the end of
Theorem~\ref{th:egraal}.  
\end{proof}

\subsection{Beyond monotonicity} \label{sec:beyond} 
A more careful examination of the proof of
Theorem~\ref{th:egraal} can help us to relax assumption
\hyperref[C3]{(C3)} (or \hyperref[C4]{(C4)}) even more. In particular,
one can impose
\begin{equation} \label{mvi}
 \exists \z\in \E \quad \text{s.t.}\quad \lr{F(z), z-\z} + g(z) - g(\z)\geq 0 \qquad \forall z\in \E.
\end{equation}
The above problem is known as Minty variational inequality (MVI)
associated with VI \eqref{vip}. Let $S_{MVI}, S_{VI}$ denote the
solution sets of MVI \eqref{mvi} and VI \eqref{vip}
respectively. Essentially, condition~\eqref{mvi} asks for
$S_{MVI}\neq \varnothing $.  It is a standard fact that when $F$ is
monotone, both problems are equivalent, see \cite[Lemma
1.5]{kinderlehrer1980introduction}. In general case ($F$ is
continuous), one can only claim that $S_{MVI}\subset S_{VI}$.
\begin{theorem}\label{th:be1} Suppose that $F\colon \dom g\to \E$ is
    locally Lipschitz continuous, $g\colon \E \to (-\infty, +\infty]$
    is convex lsc, and $S_{MVI}\neq \varnothing$. Then all cluster
    points of $(z^k)$, generated by Alg.~\ref{alg:egraal}, are
    solutions of~\eqref{vip}.
\end{theorem}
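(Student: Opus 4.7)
The strategy is to inspect the proof of Theorem~\ref{th:egraal} and locate the single place where monotonicity~\hyperref[C3]{(C3)} is invoked, then substitute the Minty condition~\eqref{mvi} at that spot.

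Monotonicity enters the proof of Theorem~\ref{th:egraal} only in the second inequality of~\eqref{ls:eq:7}, which minorizes $\la_k(\lr{F(z^k), z^k - z}+g(z^k)-g(z))$ by $\la_k\Psi(z, z^k)$, the latter being shown to be nonnegative for $z=z^*\in S$. I would instead stop at the \emph{first} inequality of~\eqref{ls:eq:7} and fix $z = \z \in S_{MVI}$: applying~\eqref{mvi} with its dummy variable equal to $z^k$ yields directly
\[
    \lr{F(z^k), z^k - \z} + g(z^k) - g(\z) \geq 0.
\]
Consequently the right-hand side of (the first inequality of)~\eqref{ls:eq:7} is nonnegative, which is all that the subsequent norm manipulations require. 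The derivation then proceeds verbatim (with $z$ replaced by $\z$ and $\Psi(z, z^k)$ replaced by the bracketed quantity above) and produces the analogue of~\eqref{ls:eq:11}:
\[
    \frac{\phi}{\phi-1}\n{\z^{k+1}-\z}^2 + \frac{\th_k}{2}\n{z^{k+1}-z^k}^2 \leq \frac{\phi}{\phi-1}\n{\z^{k}-\z}^2 + \frac{\th_{k-1}}{2}\n{z^{k}-z^{k-1}}^2 - \th_k\n{z^k - \z^k}^2.
\]
In particular the sequence $\bigl(\frac{\phi}{\phi-1}\n{\z^k-\z}^2 + \frac{\th_{k-1}}{2}\n{z^k-z^{k-1}}^2\bigr)$ is nonincreasing, $(\z^k)$ and $(z^k)$ are bounded, and $\th_k\n{z^k-\z^k}^2\to 0$.

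Lemma~\ref{l:separ_la1}, whose proof uses only boundedness of $(z^k)$ together with local Lipschitzness of $F$, then yields that $(\la_k)$ and $(\th_k)$ are bounded and bounded away from zero. This gives $\n{z^k-\z^k}\to 0$ and, via the identity $z^{k+1}-\z^k = \phi(z^{k+1}-\z^{k+1})$, also $z^{k+1}-z^k \to 0$. Finally, if $z^{k_i}\to \tilde z$ along some subsequence and (after a further extraction) $\la_{k_i}\to \la>0$, then $\z^{k_i}, z^{k_i+1}\to \tilde z$ as well, so passing to the limit in~\eqref{ls:eq:4} along $k_i$ (using continuity of $F$ and lower semicontinuity of $g$) gives
\[
    \la\lr{F(\tilde z), z - \tilde z} \geq \la(g(\tilde z) - g(z)) \quad \forall z \in \E,
\]
so $\tilde z$ is a solution of~\eqref{vip}.

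The only delicate point, rather than a genuine obstacle, is that the Fejér-type estimate is now available only with respect to the fixed $\z \in S_{MVI}$, not for every element of $S_{VI}$. For this reason Lemma~\ref{l:fejer} cannot be applied to upgrade subsequential convergence to convergence of the whole sequence, which is why the statement is restricted to a claim about cluster points of $(z^k)$ rather than asserting convergence of the entire sequence.
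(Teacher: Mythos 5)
Your proposal is correct and follows essentially the same route as the paper: fix $\z \in S_{MVI}$, use \eqref{mvi} to make the right-hand side of the first inequality in \eqref{ls:eq:7} nonnegative, and then rerun the estimates \eqref{ls:eq:8}--\eqref{ls:eq:12} to get boundedness, vanishing of $\n{z^k-\z^k}$, and the subsequential limit argument via \eqref{ls:eq:4}. Your closing remark correctly identifies why the conclusion is limited to cluster points: the quasi-Fej\'er estimate holds only with respect to the single anchor $\z\in S_{MVI}$, so Lemma~\ref{l:fejer} is unavailable.
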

\begin{proof}
Fix any $\z \in S_{\text{MVI}}$. Then in the proof of
Theorem~\ref{th:egraal} instead of taking arbitrary $z$, choose $z = \z$. Then instead of \eqref{ls:eq:7}, we obtain
\begin{multline}
    \label{be:eq:7} \lr{z^{k+1}-\z^k, \z - z^{k+1}} + \th_k
    \lr{z^{k}-\z^{k}, z^{k+1} - z^{k}} + \la_k \lr{F(z^k)-F(z^{k-1}),
        z^k-z^{k+1}}\\ \geq \la_k [\lr{F(z^k), z^k - \z} +g(z^k) - g(\z)] \geq 0,
\end{multline} where the last inequality holds because of \eqref{mvi}. Proceeding as in \eqref{ls:eq:8}--\eqref{ls:eq:12}, we can deduce
\begin{multline}
    \label{be:eq:12} \frac{\phi}{\phi -1 } \n{\z^{k+1}-\z}^2 + \frac{\th_k}{2}\n{z^{k+1}-z^k}^2 + \sum_{i=2}^k\th_i \n{z^i-\z^{i}}^2 \\ \leq \frac{\phi}{\phi -1 } \n{\z^{2}-\z}^2 + \frac{\th_{1}}{2}\n{z^{2}-z^{1}}^2 -\th_2 \n{z^2-\z^2}^2.
\end{multline} From this we obtain that $(\z^k)$ is bounded and
$\th_k\n{z^k-\z^k}\to 0$. Then in the same way as in
Theorem~\ref{th:egraal}, one can show that all cluster points of
$(z^k)$ are elements of $S_{VI}$.  
\end{proof}
Of course, we obtain slightly weaker convergence guarantees than in
Theorem~\ref{th:egraal}, but on the other hand our assumptions are
much more general.

\section{Numerical experiments}
\label{sec:numer-exper}

This section collects several numerical experiments\footnote{All codes can be found on
\url{https://gitlab.gwdg.de/malitskyi/graal.git}.} to confirm our
findings.  Computations were performed
using Python 3.6 on a standard laptop running 64-bit Debian GNU/Linux.
In all experiments we take $\phi = 1.5$ for Alg.~\ref{alg:egraal}.

\subsection{Nash--Cournot equilibrium}
\label{sec:nash-equilibrium}
Here we study a Nash--Cournot oligopolistic equilibrium model. We give
only a short description, for more details we refer to
\cite{pang:03}. There are $n$ firms, each of them supplies a
homogeneous product in a non-cooperative fashion. Let $q_i\geq 0$
denote the $i$th firm's supply at cost $f_i(q_i)$ and
$Q = \sum_{i=1}^n q_i$ be the total supply in the market. Let $p(Q)$
denote the inverse demand curve. A variational inequality that
corresponds to the equilibrium is
\begin{equation}\label{nash}
\text{find}\quad q^* = (q_1^*,\dots, q_n^*)\in \R^{n}_+ \quad
\text{s.t. } \lr{F(q^*), q-  q^*}\geq 0,\quad
\forall q \in \R^{n}_+,
\end{equation}
where $F(q^*) = (F_1(q^*),\dots, F_n(q^*))$ and 
\begin{equation}
 F_i(q^*) = f'_i(q_i^*) - p\Bigl(\sum_{j=1}^n q_j^*\Bigr) - q_i^*
p'\Bigl(\sum_{j=1}^n q_j^*\Bigr)
\end{equation}

As a particular example, we assume that the inverse demand function $p$
and the cost function $f_i$ take the form:
\begin{equation}
 p(Q) = 5000^{1/\gamma}Q^{-1/\gamma} \qquad \text{and} \qquad f_i(q_i) = c_i q_i + \frac{\b_i}{\b_i+1}L_i^{\frac{1}{\b_i}} q_i^{\frac{\b_i+1}{\b_i}}
\end{equation}
with some constants that will be defined later. This is a classical
example of the Nash-Cournot equilibrium first proposed in
\cite{murphy1982mathematical} for $n=5$ players and later reformulated
as monotone VI in \cite{harker1984variational}. To make the problem
even more challenging, we set $n=1000$ and generate our data randomly
by two scenarios. Each entry of $\b$, $c$ and $L$ are drawn
independently from the uniform distributions with the following
parameters:
% \begin{itemize}
\begin{enumerate}[(a)]
    \item $\gamma =1.1$, $\b_i\sim \mathcal{U}(0.5,2)$,
    $c_i\sim \mathcal{U}(1,100)$, $L_i\sim \mathcal{U}(0.5,5)$;

    \item $\gamma =1.5$, $\b_i\sim \mathcal{U}(0.3,4)$ and $c_i$, $L_i$ as above.
    % \end{itemize}
\end{enumerate}
Clearly, parameters $\b$ and $\gamma$ are the most important as they
control the level of smoothness of $f_i$ and $p$. There are two main
issues that make many existing algorithms not-applicable to this
problem. The first is, of course, that due to the choice of $\b$ and
$\gamma$, $F$ is not Lipschitz continuous. The second is that $F$ is
defined only on $\R^n_+$.  This issue, which was already noticed in
\cite{solodov1999new} for the case $n=10$, makes the above problem
difficult for those algorithms that compute $F$ at the point which is
a linear combination of the feasible iterates. For example, the
reflected projected gradient method evaluates $F$ at $2x^k-x^{k-1}$,
which might not belong to the domain of $F$.

For each scenario above we generate $10$ random instances and for
comparison we use the residual $\n{q - P_{\R^n_+}(q - F(q))}$, which
we compute in every iteration. The starting point is
$z^1 = (1,\dots, 1)$. We compare aGRAAL with Tseng's FBF method with
linesearch \cite{tseng00}.  The results are reported in
Fig.~\ref{fig:nash}. One can see that aGRAAL substantially outperforms the
FBF method. Note that the latter method even without linesearch requires
two evaluations of $F$, so in terms of the CPU time that distinction
would be even more significant.
\begin{figure}[t]
  \begin{subfigure}[b]{0.49\linewidth}
    \centering
   \includegraphics[width=\linewidth]{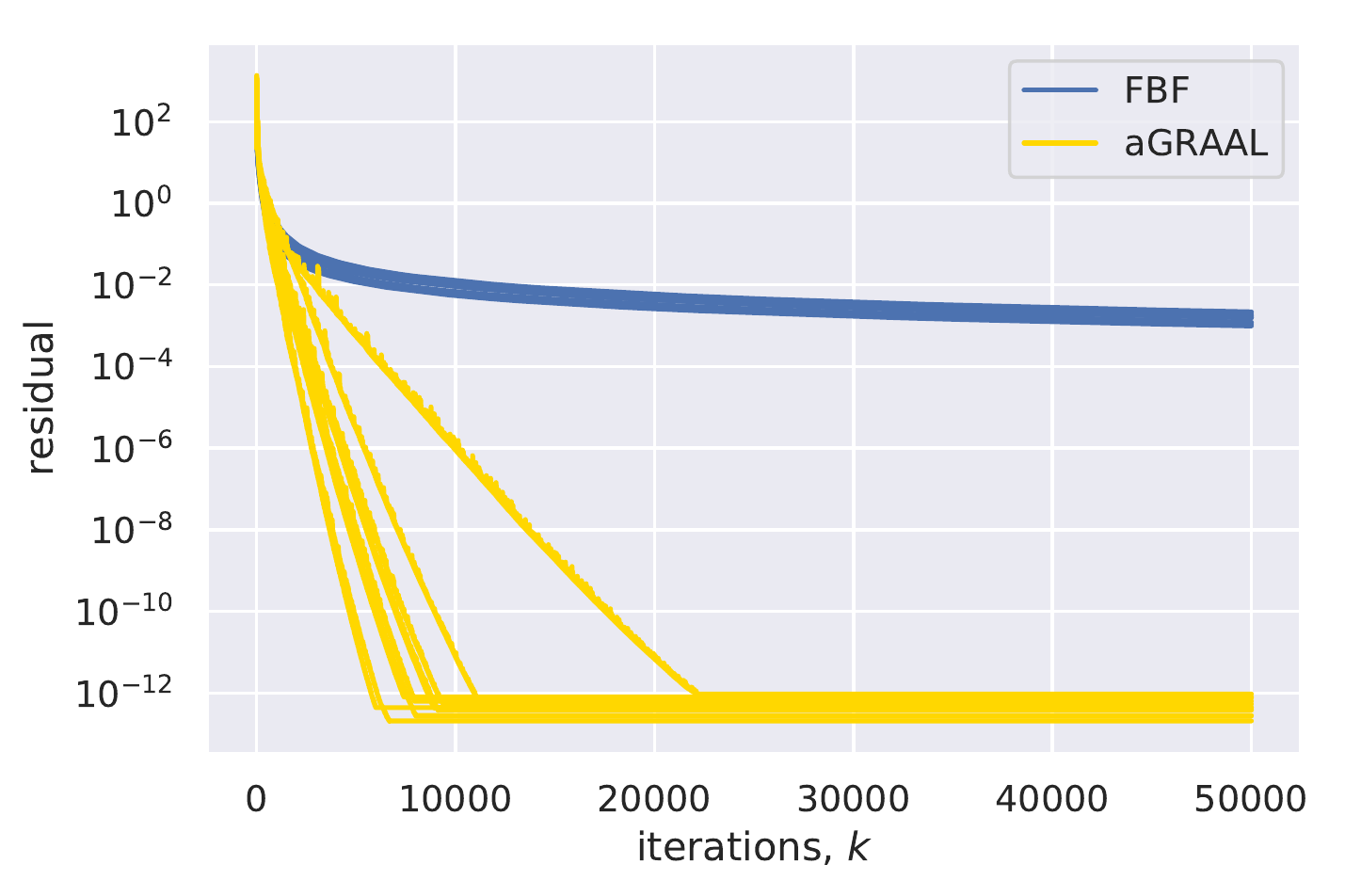}
  \end{subfigure}
  \begin{subfigure}[b]{0.49\linewidth}
    \centering
   \includegraphics[width=\linewidth]{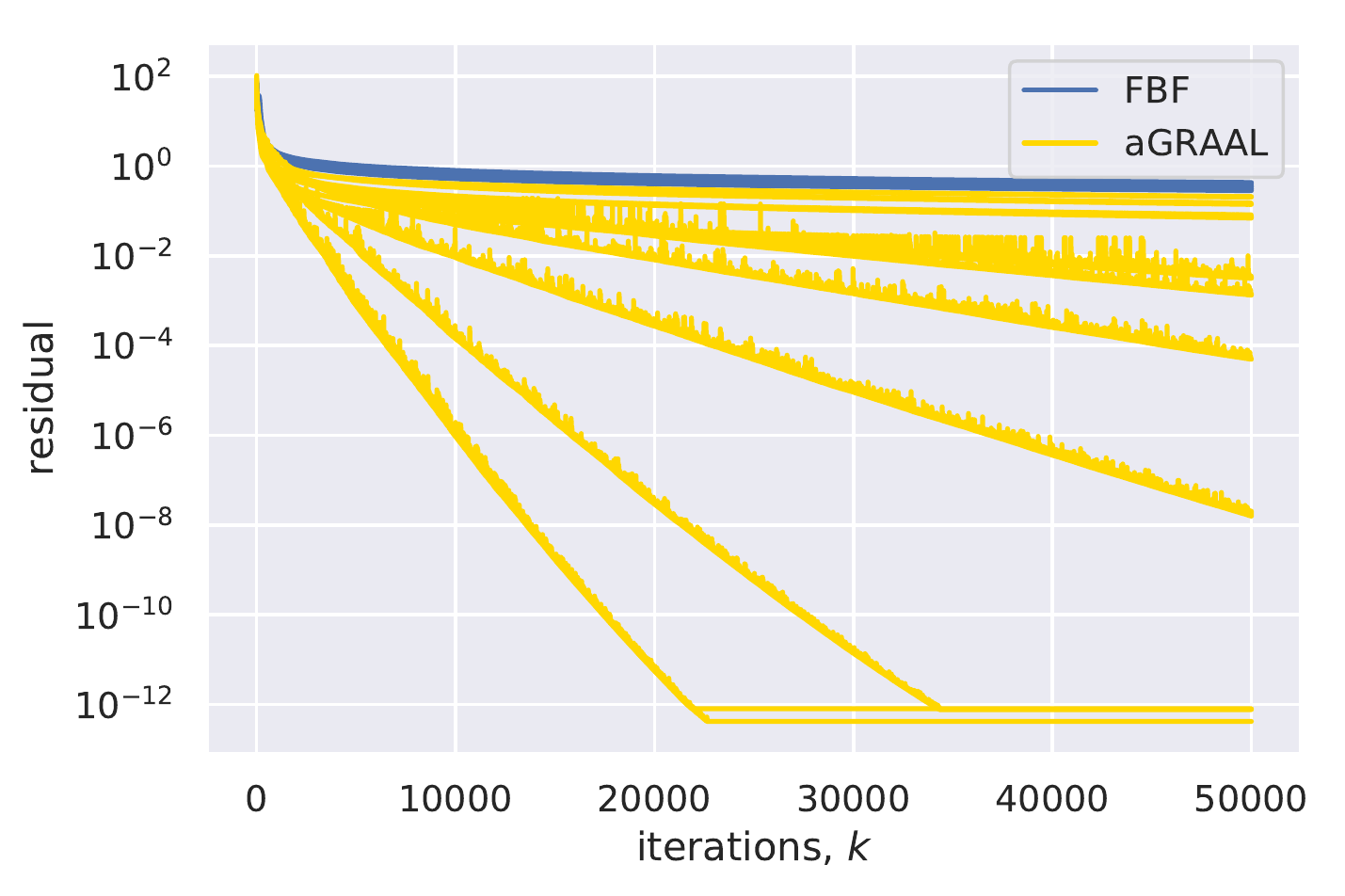}
\end{subfigure}
\caption{ Results for problem \eqref{nash}. Scenario (a) on the
left, (b) on the right.}
\label{fig:nash}
\end{figure}

\subsection{Convex feasibility problem}
Given a number of closed convex sets $C_i\subset \E$, $i=1,\dots,m$, the
convex feasibility problem (CFP) aims to find a point in their
intersection: $x \in \cap_{i=1}^m C_i$. The problem is very general
and allows one to represent many practical problems in this form.
Projection methods are a standard tool to solve such problem (we refer
to \cite{bauschke1996projection,combettes1996convex} for a more
in-depth overview). In this note we study a simultaneous projection
method: $x^{k+1} = Tx^k$, where
$T = \frac 1 m (P_{C_1}+\dots + P_{C_m})$. Its main advantage is that
it can be easily implemented on a parallel computing
architecture. Although it might be slower than the cyclic projection
method $x^{k+1}=P_{C_1}\dots P_{C_m}x^k$ in terms of iterations, for
large-scale problems it is often much faster in practice due to
parallelization and more efficient ways of computing $Tx$.

One can look at the iteration $x^{k+1} = Tx^k$ as an application of the
Krasnoselskii-Mann scheme for the firmly-nonexpansive operator
$T$. By that $(x^k)$ converges to a fixed point of $T$, which is either
a solution of CFP (consistent case) or a
solution of the problem $\min_x \sum_{i=1}^m \dist(x,C_i)^2$ (inconsistent
case when the intersection is empty).

To illustrate Remark~\ref{r:km}, we show how in many cases aGRAAL with
$F = \id -T$ can accelerate convergence of the simultaneous projection
algorithm. We believe, this is quite interesting, especially if one
takes into account that our framework works as a black box: it does
not require any tuning or a priori information about the initial
problem.

\textbf{Tomography reconstruction.}
The goal of the tomography reconstruction problem is to obtain a slice
image of an object from a set of projections (sinogram).
Mathematically speaking, this is an instance of a linear inverse
problem
\begin{equation}
    \label{eq:tomo}
    Ax = \hat b,
\end{equation}
where $x\in \R^{n}$ is the unknown image, $A\in \R^{m\times n}$ is the
projection matrix, and $\hat b\in \R^{m}$ is the given sinogram. In
practice, however, $\hat b $ is contaminated by some noise
$\e \in \R^m$, so we observe only $b = \hat b + \e$. It is clear that
we can formulate the given problem as CFP with
$ C_i = \{x\colon \lr{a_i,x} = b_i\}$.  However, since the projection
matrix $A$ is often rank-deficient, it is very likely that
$b\notin \range(A)$, thus we have to consider the inconsistent case
$\min_x \sum_{i=1}^m \dist(x,C_i)^2$.  As the projection onto $C_i$ is
given by $P_{C_i}x = x - \frac{\lr{a_i,x}-b_i}{\n{a_i}^2}a_i$,
computing $Tx$ reduces to the matrix-vector multiplications which is
realized efficiently in most computer processors. Note that our
approach only exploits feasibility constraints, which is definitely
not a state of the art model for tomography reconstruction. More
involved methods would solve this problem with the use of some
regularization techniques, but we keep such simple model for
illustration purposes only.

As a particular problem, we wish to reconstruct the Shepp-Logan
phantom image $256\times 256$ (thus, $x\in \R^{n}$ with $n=2^{16}$)
from the far less measurements $m = 2^{15}$. We generate the matrix
$A \in \R^{m\times n}$ from the \texttt{scikit-learn} library and
define $b = Ax + \e$, where $\e \in \R^m$ is a random vector, whose
entries are drawn from $\mathcal{N}(0,1)$. The starting point was
chosen as $x^1 = (0,\dots,0)$ and $\la_0 = 1$. In Fig.~\ref{fig:tomo}
(left) we report how the residual $\n{x^k-Tx^k}$ is changing w.r.t.\
the number of iterations and in Fig.~\ref{fig:tomo} (right) we show the
size of steps for aGRAAL scheme. Recall that the CPU time of both is
almost the same, so one can reliably state that in this case aGRAAL in
fact accelerates the fixed point iteration $x^{k+1}=Tx^k$.  The
information about the steps $\la_k$ gives us at least some explanation
of what we observe: with larger $\la_k$ the role of $Tx^k$ in
\eqref{fix:zk+1} increases and hence we are faster approaching to the
fixed point set.
\begin{figure}[t]
  \begin{subfigure}[b]{0.49\linewidth}
    \centering
   \includegraphics[width=\linewidth]{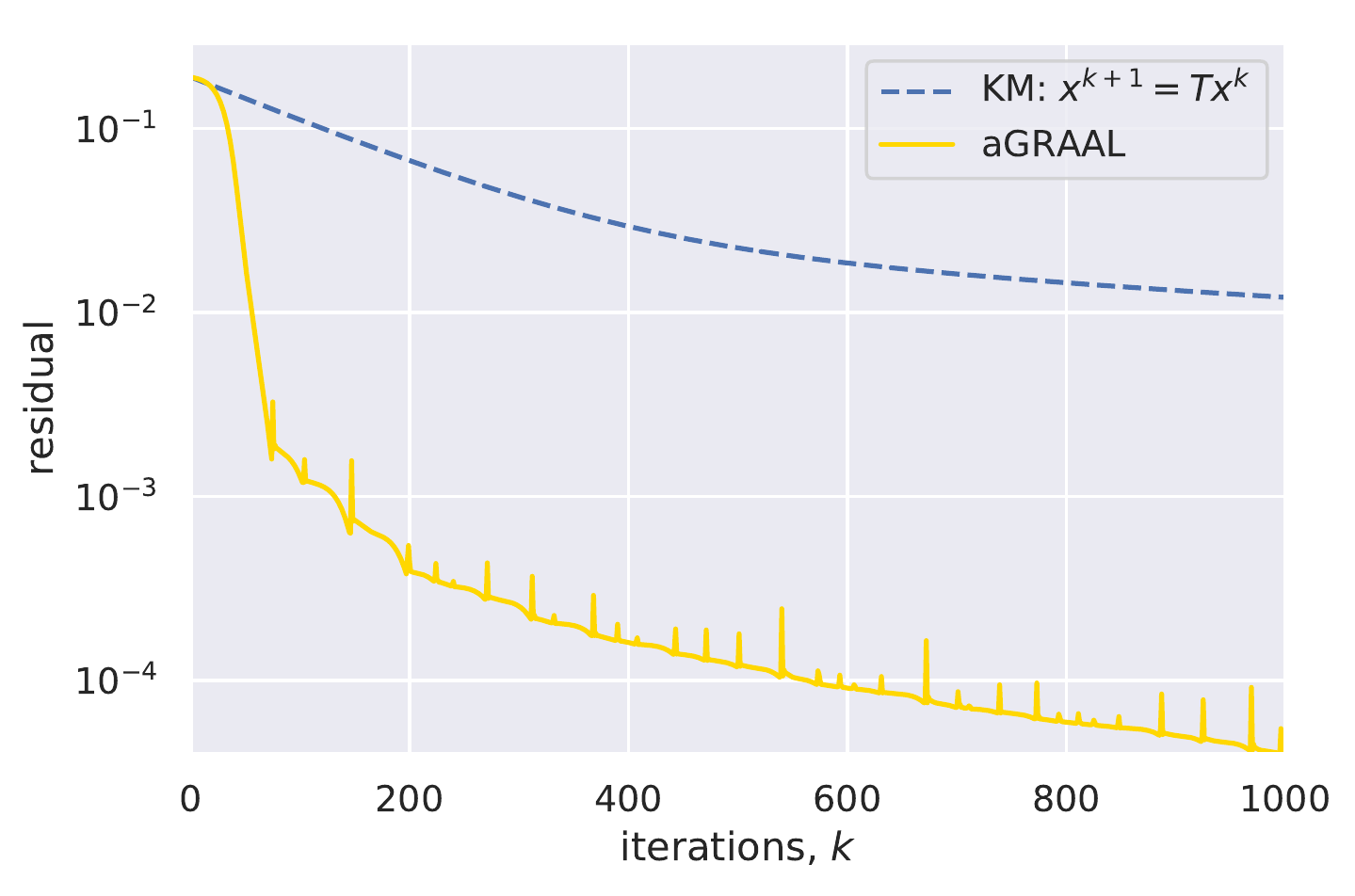}
  \end{subfigure}
  \begin{subfigure}[b]{0.49\linewidth}
    \centering
   \includegraphics[width=\linewidth]{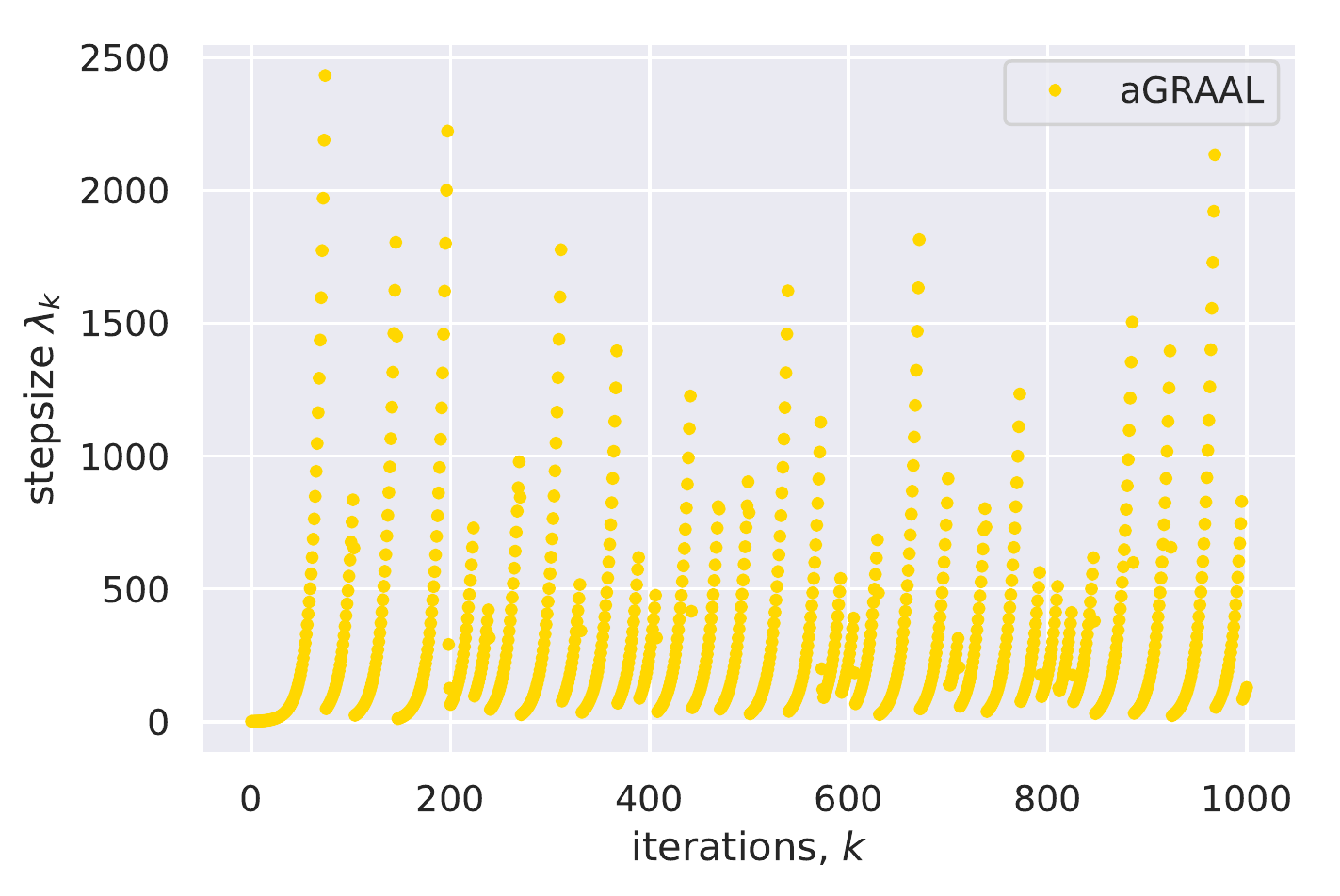}
\end{subfigure}
\caption{ Results for problem \eqref{eq:tomo}. Left: the behavior of residual $\n{x^k-Tx^k}$ for KM and aGRAAL
methods. Right: the size of stepsize of $\la_k$ for aGRAAL}
\label{fig:tomo}
\end{figure}

\textbf{Intersection of balls.}
Now we consider a synthetic nonlinear feasibility problem. We have to find a point
in $x\in \cap_{i=1}^mC_i$, where $C_i = \mathbb{B}(c_i, r_i)$, a closed
ball with a center $c_i\in \R^n$ and a radius $r_i>0$.
The projection onto $C_i$ is simple: $P_{C_i}x$ equals to
$\frac{x-c_i}{\n{x-c_i}}r_i$ if $\n{x-c_i}>r_i$ and $x$ otherwise.
Thus, again computing $Tx = \frac 1 m \sum_{i=1}^mP_{C_i}x$ can be done in parallel very efficiently.

We run two scenarios: with $n=1000$, $m=2000$ and with $n=2000$,
$m=1000$. Each coordinate of $c_i\in \R^n$ is drawn from
$\mathcal{N}(0,100)$. Then we set $r_i = \n{c_i}+1$ that ensures that
zero belongs to the intersection of $C_i$. The starting point was
chosen as the average of all centers:
$x^1 = \frac 1 m \sum_{i=1}^m c_i$. As before, since the cost of
iteration of both methods is approximately the same, we show only how
the residual $\n{Tx^k-x^k}$ is changing w.r.t.\ the number of
iterations. To eliminate the role of chance, we plot the results for
$100$ random realizations from each of the above
scenarios. Fig.~\ref{fig:balls} depicts the results. As one can see, the
difference is again significant.

% \begin{figure}[ht]
%   \begin{subfigure}[b]{0.49\linewidth}
%     \centering
%    \includegraphics[width=\linewidth]{balls-n=1000.pdf}
%   \end{subfigure}
%   \begin{subfigure}[b]{0.49\linewidth}
%     \centering
%    \includegraphics[width=\linewidth]{balls-n=2000.pdf}
% \end{subfigure}
% \caption{Convergence results for CFP with random balls $C_i$. %$C_i=\mathbb{B}(c_i,r_i)$.
% $T = \frac{1}{m} \sum_{i=1}^m P_{C_i}$.  Left: $n=1000$,
% $m=2000$. Right: $n=2000$, $m=1000$.}
% \label{fig:balls}
% \end{figure}

\begin{figure}[ht]
  \begin{subfigure}[b]{0.5\linewidth}
    \centering
   \includegraphics[width=\linewidth]{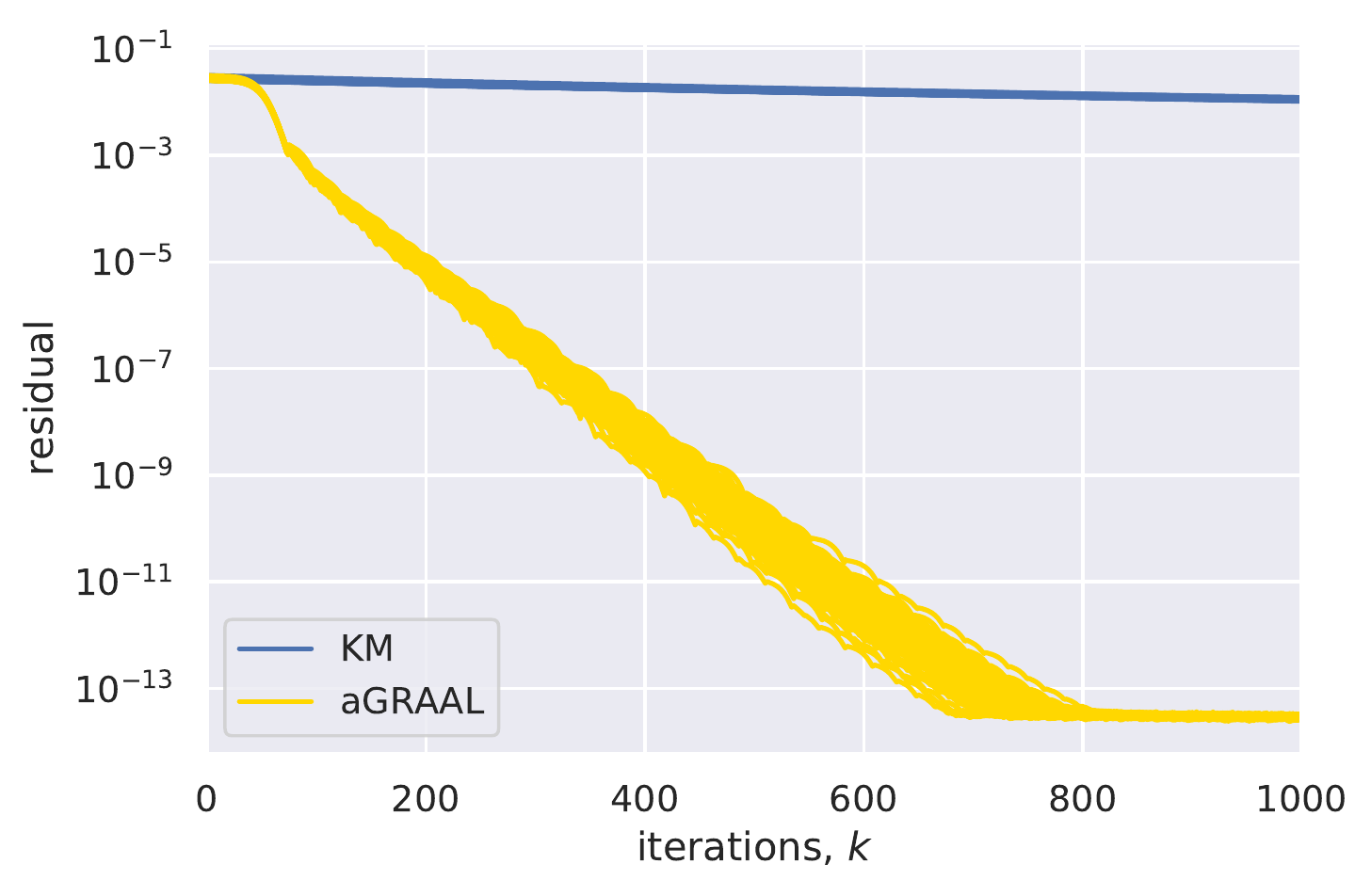}
  \end{subfigure}
  \begin{subfigure}[b]{0.5\linewidth}
    \centering
   \includegraphics[width=\linewidth]{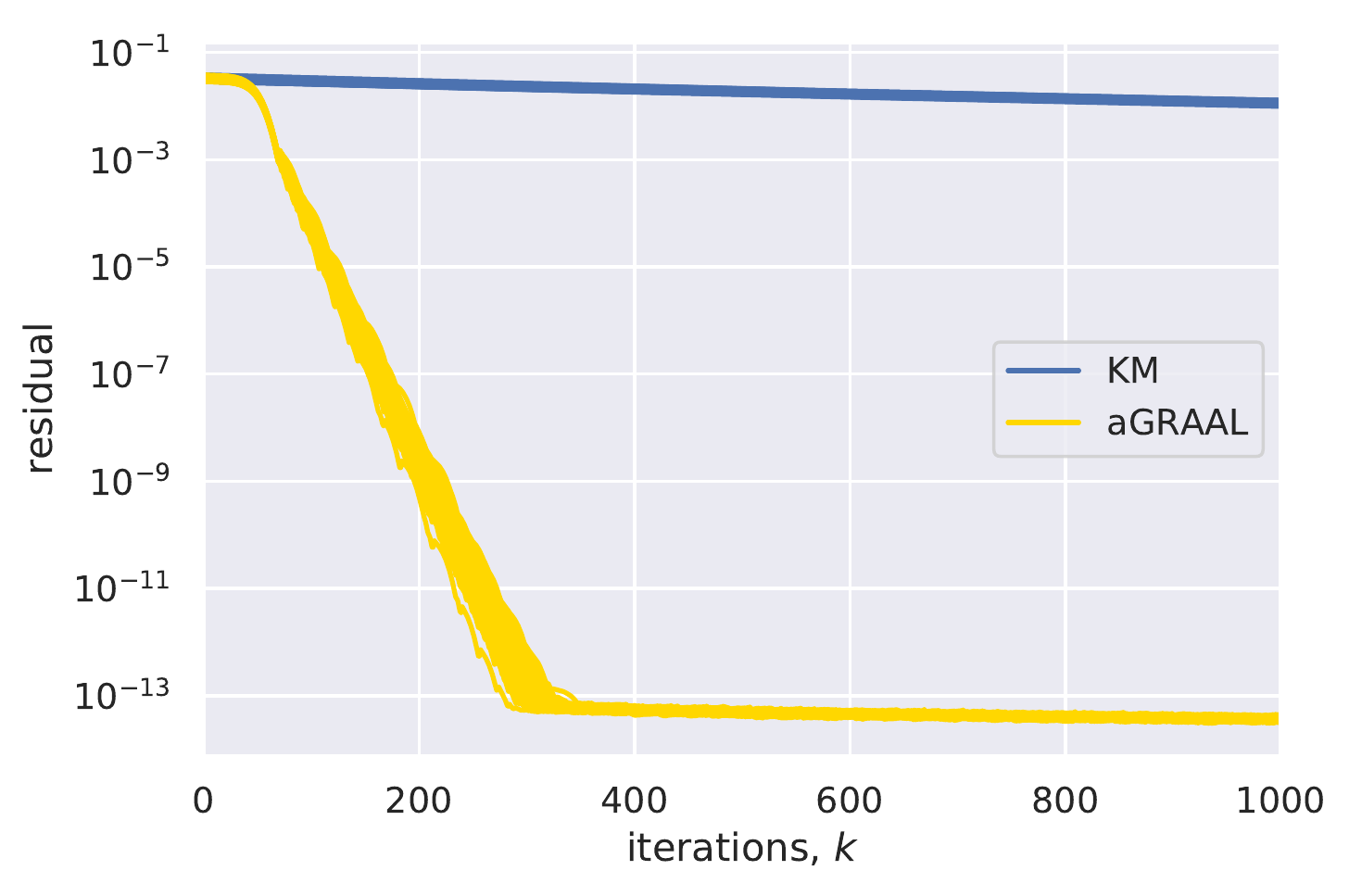}
\end{subfigure}
\caption{Convergence results for CFP with random balls $C_i$. %$C_i=\mathbb{B}(c_i,r_i)$.
$T = \frac{1}{m} \sum_{i=1}^m P_{C_i}$.  Left: $n=1000$,
$m=2000$. Right: $n=2000$, $m=1000$.}
\label{fig:balls}
\end{figure}

\subsection{Sparse logistic regression}
In this section we demonstrate that even for nice problems as convex
composite minimization~\eqref{comp} with Lipschitz $\nabla f$, aGRAAL
can be faster than the proximal gradient method or accelerated
proximal gradient method. This is interesting, especially since the
latter method has a better theoretical convergence rate.

Our problem of interest is a sparse logistic regression
\begin{equation}
    \label{eq:slr}
     \min_x J(x):=\sum_{i=1}^m \log(1+\exp (-b_i\lr{a_i, x})) + \gamma \n{x}_1,
\end{equation}
where $x\in \R^n$, $a_i\in \R^n$, and $b_i\in \{-1, 1\}$, $\gamma
>0$. This is a popular problem in machine learning applications where
one aims to find a linear classifier for points $a_i$.  Let us form a
new matrix $K\in \R^{m\times n}$ as $K_{ij} = -b_ia_{ij}$ and set
$\tilde f(y) = \sum_{i=1}^m\log(1+\exp (y_i))$. Then the objective in
\eqref{eq:slr} is $J(x) = f(x)+g(x)$ with $g(x) = \gamma \n{x}_1$ and
$f(x) = \tilde f(Kx)$. As $\tilde f$ is separable, it is easy to
derive that $L_{\nabla \tilde f} = \frac 1 4$. Thus,
$L_{\nabla f} = \frac 1 4 \n{K^TK}$.

We compare our method with the proximal gradient method (PGM) and
FISTA~\cite{fista} with a fixed stepsize.  We have not included into
comparison the extensions of these methods with linesearch, as we are
interested in methods that require one evaluation of $\nabla f$ and
one of $\prox_g$ per iteration. For both methods we set the stepsize
as $\la = \frac{1}{L_{\nabla f}} = \frac{4}{\n{K^TK}}$. We take two
popular datasets from LIBSVM~\cite{libsvm}: \emph{real-sim} with
$m=72309$, $n=20958$ and \emph{kdda-2010} with $m=510302$,
$n=2014669$. For both problems we set
$\gamma = 0.005\n{A^Tb}_{\infty}$. We are aware of that neither PGM
nor FISTA can be considered as the state of the art for
\eqref{eq:slr}, stochastic methods seem to be more competitive as the
size of the problem is quite large. Overall our motivation is not to
propose the best method for \eqref{eq:slr} but to demonstrate the
performance of aGRAAL on some real-world problems.

We run all methods for sufficiently many iterations and compute the
energy $J(x^k)$ in each iteration. If after $k$ iterations the
residual was small enough:
$\n{x^k-\prox_g(x^k - \nabla f(x^k))}\leq 10^{-6}$, we choose the
smallest energy value among all methods and set it to $J_*$. In
Fig.~\ref{fig:regr} we show how the energy residual $J(x^k)-J_*$ is
changing w.r.t.\ the iterations. Since the dimensions in both
problems are quite large, the CPU time for all methods is
approximately the same.
\begin{figure}[ht]
  \begin{subfigure}[b]{0.5\linewidth}
      \centering
      \includegraphics[width=\linewidth]{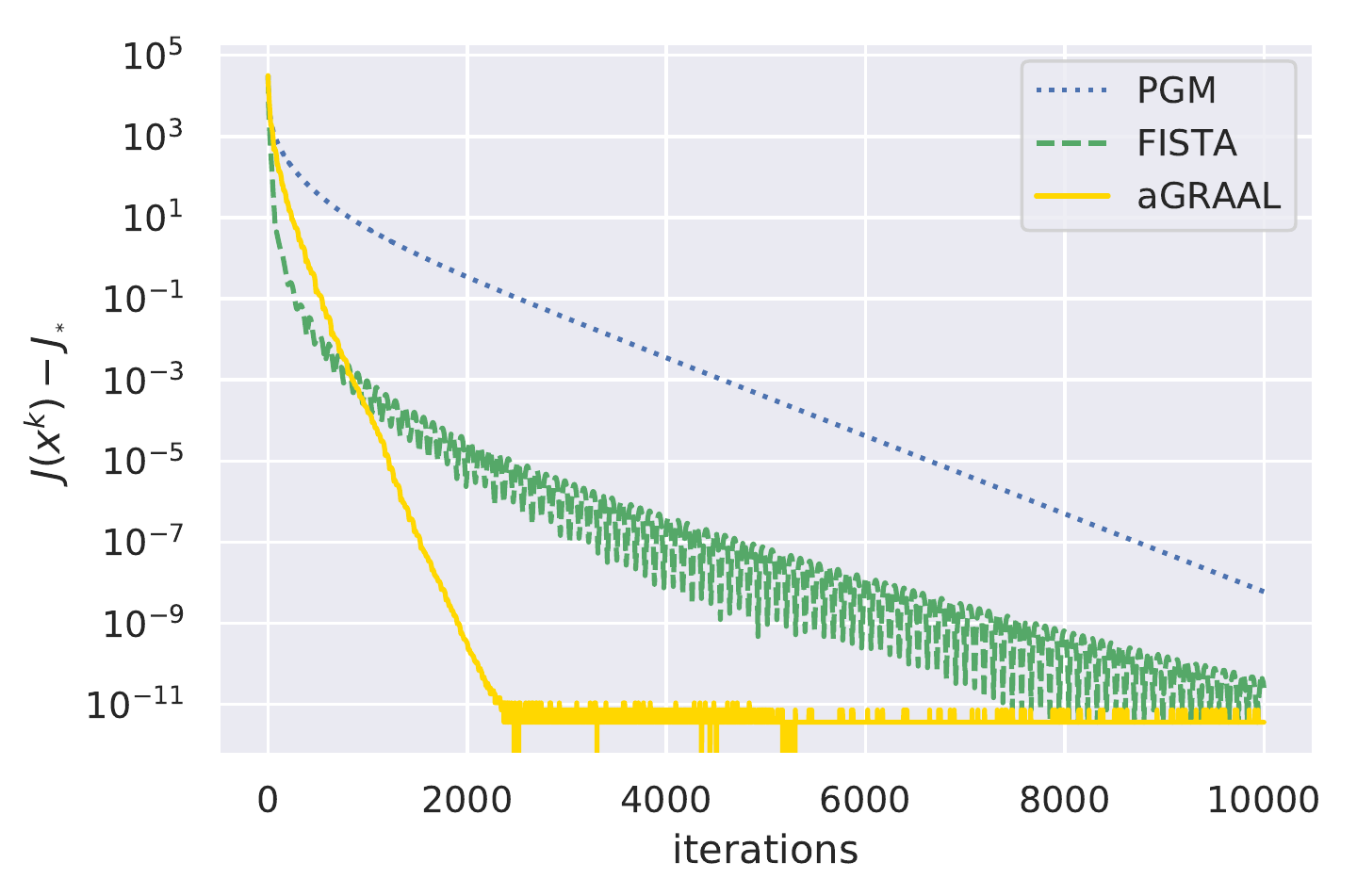}
  \end{subfigure}
  \begin{subfigure}[b]{0.5\linewidth}
    \centering  
    \includegraphics[width=\linewidth]{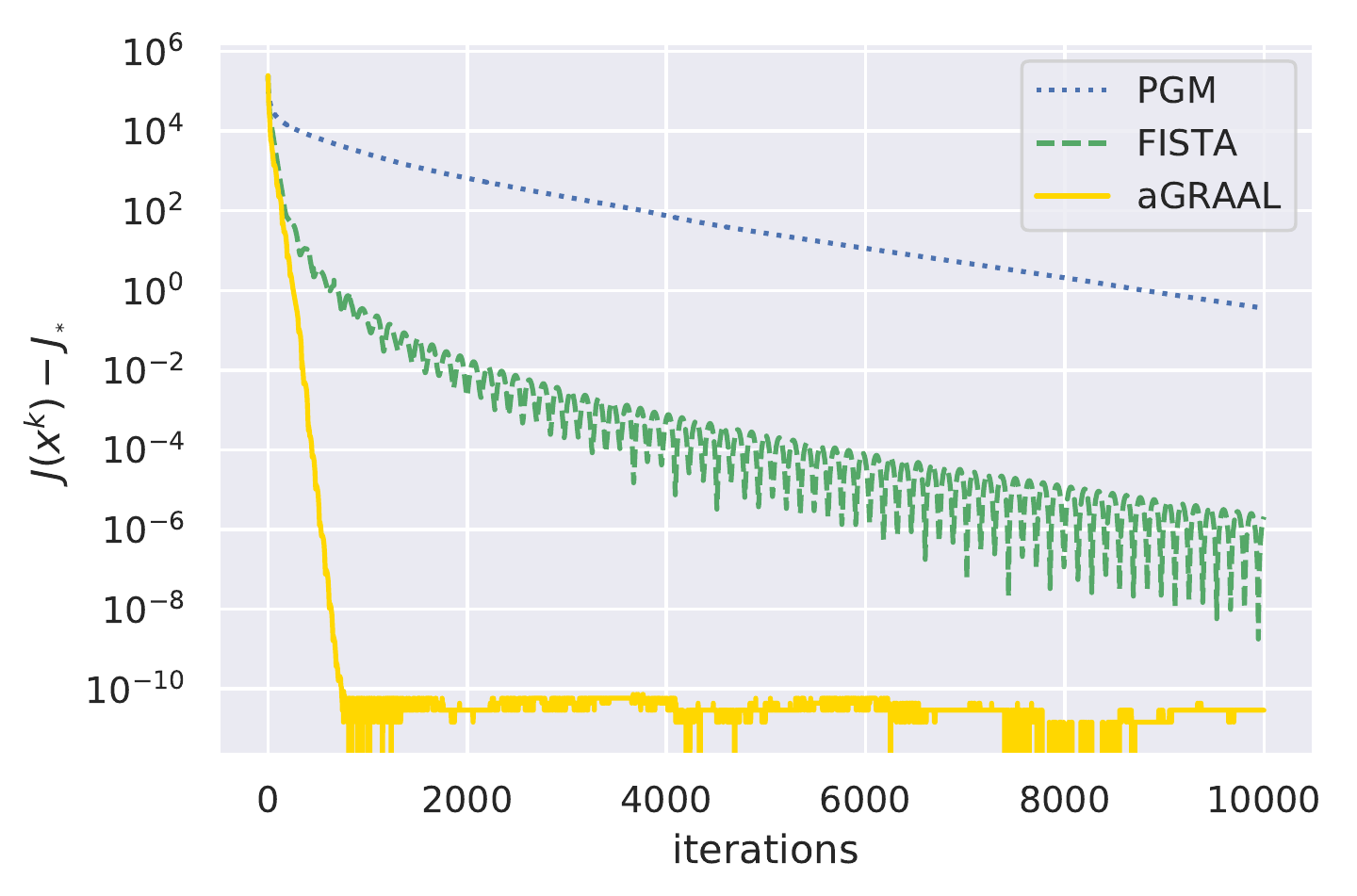}
    \end{subfigure}
\caption{ Results for problem \eqref{eq:slr}. Datasets \emph{real-sim}
(left) and \emph{kdda-2010} (right).}
\label{fig:regr}
\end{figure}

We have presented the results of only two test problems merely for
compactness, in fact similar results were observed for other datasets
that we tested: \emph{rcv1}, \emph{a9a}, \emph{ijcnn1},
\emph{covtype}. An explanation for such a good performance of aGRAAL
is of course that for this problem the global Lipschitz constant of
$\nabla f$ is too conservative. Notice also that our algorithm did not
take into account that in this case $F = \nabla f$ is a potential
operator. It would be interesting to see how we can enhance aGRAAL
with this information.

\subsection{Beyond monotonicity}
Finally, we illustrate numerically that aGRAAL can work even for
nonmonotone problems. % We consider two problems: solving an equation
% $F(z) = 0$ and finding an invariant unit vector for the operator
% $\tilde T$.

\textbf{Nonmonotone equation.}  We would like to find a non-zero
solution of $F(z): = M(z)z = 0$, where
$M\colon \R^n\to \R^{n\times n}$ is a matrix-valued function. This is
of course a VI \eqref{vip} with $g\equiv 0$. We construct $M$ such
that $M(z)$ is a symmetric positive semidefinite matrix for any
$z$. Thus, $F$ obviously satisfies condition~\eqref{mvi}, since with
$\z = 0$ we have $\lr{M(z),z}\geq 0$ for all $z$. This also
indicates that $z = 0$ is a (trivial) solution of our problem.

For the experiment, we define $M$ as
\begin{equation}
    \label{eq:numF}
     M(z) := t_1t_1^T + t_2t_2^T, \quad \text{with}\quad t_1 = A\sin
    z, \quad t_2 = B \exp z,
\end{equation}
where $z\in \R^n$, $A,B \in \R^{n\times n}$ and the operations $\sin$ and $\exp$
should be understood to apply entrywise.  In fact, for our purposes
one term $t_1t_1^T$ in $F$ was sufficient, however we want to be
sure that a non-zero solution of our problem will not coincide with a
solution of a simple problem, like $Az = 0$.

For each $n\in \{100,500, 1000, 5000\}$ we generate $100$
random problems. We run aGRAAL for $10000$ iterations and stopped it
whenever the accuracy $\n{F(z^k)}\leq 10^{-6}$ reached.
Table~\ref{tab:bey} shows the success rate of solving this problem, i.e.,
we counted only those problems where $\n{z^k}$ was large enough to
make sure that this is not a trivial solution. We also report the
average number of iterations (among all successful instances) the
method needs to find a non-trivial solution. The entries of $A$, $B$
are drawn independently from the normal distribution
$\mathcal{N}(0,1)$. The starting point is always
$z^1 = (1,\dots, 1)$. It is clear that $F$ is not monotone; moreover,
the transcendental functions $\sin $, $\exp$ make this problem highly
nonlinear.

\begin{table}\centering
    \footnotesize
    \caption{Results of aGRAAL for nonmonotone problem
        \eqref{eq:numF}} \label{tab:bey}
\ra{1}
\begin{tabular}{@{}rrrrrr@{}}\toprule
  %&  \phantom{b}& \multicolumn{4}{c}{} \\
  %\addlinespace[-0.1cm]
  %\cmidrule{3-6}
  & &  $n=100$ &$n=500$ & $n=1000$ & $n=5000$  \\
  \midrule \\
  \addlinespace[-0.3cm]
  \texttt{iter}  && 526 & 614 & 667 & 1532\\
  \texttt{rate}  && 100 & 100 & 100 & 99 \\
  \bottomrule
\end{tabular}
\end{table}

\section{Conclusions and further directions}
\label{sec:concl}
We conclude our work by presenting some possible directions for future
research.

\textbf{Fixed point iteration.} It is interesting to represent scheme
\eqref{graal-1} as a fixed point iteration. To this end, let
\begin{equation}
 %   \label{eq:fixed0}
    G =     \begin{bmatrix}
       \id & 0\\
       0 & \prox_{\la g}
    \end{bmatrix}
\qquad  R =     \begin{bmatrix}
        \frac 1 \p \id & \frac{\p-1}{\p} \id\\
        \frac 1 \p \id & \qquad \frac{\p-1}{\p} \id - \la F
    \end{bmatrix}.
\end{equation}
Then it is not difficult to see that we can rewrite \eqref{graal-1} as
\begin{equation}
%    \label{eq:fixed1}
    \binom{\z^k}{z^{k+1}} =
    (G \circ R)  \binom{\z^{k-1}}{z^k}.
\end{equation}
If we now set $\u^k = (\z^{k-1},z^{k})$, the above equation simplifies
to $ \u^{k+1} = GR\u^k$. However, so far it not clear how to derive
convergence of \eqref{graal-1} from the fixed point
perspective. Although, $G$ is a firmly-nonexpansive operator, $R$ is
definitely not; thus, it is difficult to say something meaningful
about $G\circ R$.

\textbf{Inertial extensions.} Starting from the paper~\cite{polyak},
it is observed that using some inertia for the optimization algorithm
often accelerates the latter. Later,
papers~\cite{attouch01,moudafi03,pock:inertial,boct2016inertial} extend this idea to a
more general case with monotone operators. In our case it will be in
particular interesting to do so, since our scheme
\begin{equation}
    z^{k+1} = \prox_{\la g}(\z^{k}-\la F(z^k))
\end{equation} 
uses $\z^k$ as a convex combination of all previous iterates
$z^1\dots, z^{k}$. This is completely opposite to the inertial
methods, where one uses $z^k + \a (z^k-z^{k-1})$ for some $\a > 0$.

\textbf{Bregman distance.} For many VI methods it is possible to
derive their analogues for the Bregman distances, as this is done for
the the extragradient method~\cite{korpel:76} by extensions
~\cite{nemirovski2004prox,nesterov2007dual}.  It is possible to do so
for GRAAL? This extension is not trivial since, for example, in
\eqref{eq:identity} we have used the identity \eqref{eq:id}, where the
linear structure was explicitly used.

\textbf{Stochastic settings.} For large-scale problems it is often the
case that even computing $F(z^k)$ becomes prohibitively expensive. For
this reason, the stochastic VI methods that compute $F(z^k)$
approximately can be advantageous over their deterministic
counterparts, as it was shown in
\cite{baes2013randomized,juditsky2011solving}.  It is interesting to
derive similar extensions for GRAAL. The same concerns the coordinate
extensions of GRAAL.

\textbf{Continuous dynamic.} Many discrete optimization/VI methods can
be studied from the continuous perspective which may shed light on the
discrete method. Originated from 60-s, this line of research was
popularized by many authors, see
\cite{antipin1994minimization,attouch1996dynamical,attouch2018fast,banert2015forward}
and references therein. This research brings new and often deeper
understanding of the respective iterative schemes, as, for instance,
the case with the Nesterov acceleration in
\cite{su2014differential,attouch2018fast}.
For aGRAAL it is a challenging task to derive a continuous scheme, since the
function $\la(t)$ cannot be defined in advance.

\small
\paragraph{Acknowledgements.}
Y.~Maltsky was supported by German Research Foundation grant
SFB755-A4.  The author would like to thank Anna--Lena Martins,
Panayotis Mertikopoulos, {Matt\-hew} Tam, associate editor and
anonymous referees for their useful comments that have significantly
improved the quality of the paper.

\bibliographystyle{acm}
\bibliography{biblio}
\end{document}